\documentclass{article}
\usepackage{geometry}
\usepackage{latexsym,amsthm,amsmath,amssymb,url}
\usepackage[ruled, linesnumbered]{algorithm2e}
\usepackage[utf8]{inputenc}
\usepackage{tikz,authblk}
\usetikzlibrary{arrows.meta}
\usepackage{comment}
\usetikzlibrary{decorations.pathreplacing}
\usetikzlibrary{calc,shapes}

\newtheorem{theorem}{Theorem}[section]
\newtheorem{problem}{Problem}

\newtheorem{proposition}[theorem]{Proposition}

\newcommand{\nodeRR}{\tikzstyle{vertexR}=[regular polygon,regular polygon sides=4,draw, top color=gray!5, bottom color=gray!30, minimum size=12pt, scale=0.62, inner sep=0.8pt]
{\small \begin{tikzpicture}[scale=0.30]
\node (x1) at (0,0) [vertexR] {};
\end{tikzpicture} } }

\newcommand{\nodeYY}{\tikzstyle{vertexY}=[circle,draw, top color=gray!5, bottom color=gray!30, minimum size=11pt, scale=0.62, inner sep=0.8pt]
{\small \begin{tikzpicture}[scale=0.30]
\node (x1) at (0,0) [vertexY] {};
\end{tikzpicture} } }

\newcommand{\nodeWW}{\tikzstyle{vertexW}=[star, star points=10,draw, top color=gray!5, bottom color=gray!30, minimum size=11pt, scale=0.62, inner sep=0.8pt]
{\small \begin{tikzpicture}[scale=0.30]
\node (x1) at (0,0) [vertexW] {};
\end{tikzpicture} } }

\newcommand{\GG}[1]{{#1}}

\newcommand{\QG}[1]{{#1}}
\newcommand{\AYn}[1]{{#1}}
\newcommand{\GGn}[1]{{#1}}
\newcommand{\YZn}[1]{{#1}}
\newcommand{\QGn}[1]{{#1}}
\newcommand{\YLn}[1]{{#1}}
\newcommand{\GGnn}[1]{{#1}}

\newcommand{\wdom}[1]{\rightsquigarrow}
\newcommand{\wdomV}[1]{\rightsquigarrow_{#1}}

\title{Forward Arc Maximization for Hamilton Oriented Cycles and Paths in Generalizations of Tournaments}

\author{Qiwen Guo\thanks{Center for Combinatorics and LPMC, Nankai University. {\tt gqwmath@163.com}.}
\hspace{2mm}
Gregory Gutin\thanks{Department of Computer Science. Royal Holloway University of London, {\tt g.gutin@rhul.ac.uk}, and School of Mathematical Sciences and LPMC, Nankai University.}
\hspace{2mm} Yongxin Lan\thanks{School of Science, Hebei University of Technology, {\tt  2019110@hebut.edu.cn}.}
\\ Qi Shao\thanks{Center for Combinatorics and LPMC, Nankai University. {\tt sq1449682195@163.com}.}
\hspace{2mm} Anders Yeo\thanks{Department of Mathematics and Computer Science, University of Southern Denmark. {\tt andersyeo@gmail.com}, and Department of Mathematics, University of Johannesburg.}
\hspace{2mm} Yacong Zhou\thanks{Shenzhen Institutes of Advanced Technology. {\tt yacong.zhou96@gmail.com}.}}
\begin{document}
\maketitle
\begin{abstract}
Gishboliner, Krivelevich, and Michaeli (2023) conjectured the following generalization of Dirac's theorem: If the minimum degree $\delta$ of an $n$-vertex oriented graph $G$ is greater or equal to $n/2$, then $G$ has a Hamilton oriented cycle with at least $\delta$ forward arcs. Freschi and Lo (2024) proved this conjecture. In this paper, we study the problem of maximizing the number of forward arcs in Hamilton oriented cycles/paths in generalizations of tournaments. We obtain characterizations for the maximum number of forward arcs in semicomplete multipartite digraphs and locally semicomplete digraphs. These characterizations lead to polynomial-time algorithms. Note that the above problems are NP-hard for some other generalizations of tournaments \GGnn{even though the Hamilton cycle problem is polynomial-time solvable for these digraph classes}. 
\end{abstract}

\noindent\textbf{Keywords:} Hamilton oriented cycles; forward arcs; semicomplete multipartite digraphs; locally semicomplete tournaments
\section{Introduction}

Terminology and notation not introduced in this paper can be found in \cite{BG00, BG18, BM08}. The {\em underlying graph} of a digraph $D$ is an undirected multigraph $U(D)$ obtained from $D$ by removing the orientations of all arcs in $D$. The {\em degree} of a vertex $x$ in a digraph $D$, denoted by $d_D(x)$, is the degree $d_{U(D)}(x)$ of $x$ in $U(D)$. The \emph{minimum degree} of a directed or undirected graph $H$ is denoted by $\delta(H)$. A digraph $D$ is an {\em oriented graph} if there is no more than one arc between any pair of vertices in $D$.  

\GGnn{An {\em oriented path} $P=v_1v_2\dots v_p$ of $D$ is a subdigraph $P$ of $D$ such that $U(P)$ is a path in $U(D)$. Similarly, one can define an {\em oriented cycle} in $D$. Note that the vertices of oriented paths and cycles are ordered (like in undirected graphs).} For an oriented cycle $C$, the arc between $v_i$ and $v_{i+1} \pmod{p}$ on $C$ is \emph{forward} if $v_iv_{i+1}\in A(C)$ and  \emph{backward} if $v_{i+1}v_{i}\in A(C)$. \GGnn{Similarly, one can define forward and backward arcs in an oriented path. In this paper, since we are looking for oriented paths and cycles with maximum number of forward arcs, we will always assume that in an oriented path $P=v_1v_2\dots v_p$ of $D$, the arc between $v_i$ and $v_{i+1}$ is forward as long as $v_iv_{i+1}\in A(D)$, and similarly for oriented cycles. The number of forward (backward, respectively) arcs in an oriented path or cycle $Q$ is denoted by $\sigma^+(Q)$ ($\sigma^-(Q)$, respectively). If all arcs of an oriented cycle $Q$ are forward, then $Q$ is a {\em directed cycle} (or, just a {\em cycle}).
Similarly, for directed paths. }

An oriented cycle (oriented path, respectively) in $D$ is \emph{Hamilton} if it contains all vertices in $D$. Note that a digraph $D$ has a Hamilton oriented cycle (path, respectively) if and only if $U(D)$ has a Hamilton cycle (path, respectively). We say that a digraph $D$ is \emph{hamiltonian} if it has a Hamilton cycle. Similar definitions and notation can be introduced for oriented paths.



\GGnn{In 1963, Erd{\H o}s \cite{Erd63} introduced the notion of graph discrepancy for 2-edge-colored undirected graphs. His paper has launched an extensive research on the topic, see e.g. references in \cite{FL24,GKM23}. Sixty years later, Gishboliner, Krivelevich, and Michaeli \cite{GKM23} introduced a digraph analog of graph discrepancy. The directed discrepancy introduced by Gishboliner et al.  is the maximum of $\sigma_{\max}(C)$ over all Hamilton oriented cycles $C$ of a digraph $D$ which has a Hamilton oriented cycle, where $\sigma_{\max}(C)=\max\{\sigma^+(C),\sigma^-(C)\}$. Note that, in fact, the maximum of $\sigma_{\max}(C)$ equals the maximum of $\sigma^+(C)$\footnote{Indeed, for an oriented cycle $C=v_1v_2\dots v_pv_1$, $\sigma_{\max}(C)=\max\{\sigma^+(C),\sigma^+(C^{-1})\},$ where $C^{-1}=v_1v_p\dots ,v_2v_1$.}, which makes the use of notation $\sigma_{\max}(C)$ unnecessary.}
Gishboliner et al.  conjectured the following generalization of Dirac's theorem \cite{D52}, which was recently confirmed by Freschi and Lo \cite{FL24}. 

\begin{theorem}\label{Dirac-type theorem}
	Let $D$ be an oriented graph on $n\ge 3$ vertices. If $\delta(D) \ge \frac{n}{2}$, then there exists a Hamilton oriented cycle $C$ in $D$ such that \GGnn{$\sigma^+(C)\ge \delta(D)$.}
\end{theorem}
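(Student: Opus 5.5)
Our plan is a Pósa-type rotation--extension argument carried out with a potential that counts forward arcs, on top of Dirac's theorem. By Dirac's theorem \cite{D52} applied to $U(D)$, $D$ has Hamilton oriented cycles. Among all Hamilton oriented cycles $C=v_1v_2\dots v_nv_1$ (recall the convention that each arc is forward whenever possible), we choose one maximizing $\sigma^+(C)$. Suppose for contradiction that $\sigma^+(C)<\delta:=\delta(D)$, so $C$ has more than $n-\delta$ backward arcs, and we may assume $\sigma^+(C)\ge \sigma^-(C)$ after reversing. We will show that some local modification of $C$ strictly increases $\sigma^+$.

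The basic move is the Dirac/Bondy two-chord exchange. If $v_iv_{i+1}$ and $v_jv_{j+1}$ are arcs of $C$ and $v_i\sim v_j$, $v_{i+1}\sim v_{j+1}$ in $U(D)$, then $C'=v_1\dots v_iv_jv_{j-1}\dots v_{i+1}v_{j+1}\dots v_n v_1$ is again Hamilton. The segment $v_{i+1}\dots v_j$ is reversed, which swaps its forward and backward counts. Write $f$ and $b$ for the forward and backward counts inside that segment. Then
\[
\sigma^+(C')-\sigma^+(C)=(b-f)+\epsilon,
\]
where $\epsilon\in\{-2,\dots,2\}$ accounts for the two removed and the two added arcs. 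Maximality therefore forces $f-b\ge \epsilon$ for every admissible pair $(i,j)$, and likewise for the complementary segment.

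The counting step goes as follows. Fix $v_1$ at the start of a backward arc. For each neighbour $v_j$ of $v_1$ we consider the successor or predecessor of $v_j$, chosen according to the orientation of the arc between $v_1$ and $v_j$. This gives $\delta$ indices $j$, and similarly $\delta$ indices coming from $v_2$'s neighbourhood. Since $2\delta\ge n$, pigeonhole produces many admissible pairs $(i,j)$. We split into the case where $v_1v_j$ is out-directed and the case where it is in-directed. We want to choose the pairing so that both new arcs are forward, giving $\epsilon\ge 0$ and ideally $\epsilon\ge1$ because the removed arc $v_1v_2$ was backward. Summing the inequalities $f-b\ge\epsilon$ over these pairs, and using that forward and backward arcs along $C$ have a prescribed total, should give a contradiction with $\sigma^-(C)>n-\delta$. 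A second averaging over all starting backward arcs helps here, because each segment is counted a controlled number of times.

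We expect the main obstacle to be exactly this summation. The orientation of the chords can make $\epsilon$ negative, and the naive bound only recovers roughly $n/2$ forward arcs, not $\delta$. When $\delta$ is close to $n/2$, extremal configurations occur: near-bipartite $D$ or $D$ close to two cliques. In these cases the simple exchange argument is tight and fails to improve. We would therefore separate an extremal case, where $U(D)$ is close to $K_{n/2,n/2}$ or to two disjoint cliques, and handle it by an explicit construction of a cycle that alternates suitably through the parts. In the non-extremal case we would use an absorbing/regularity approach: find a long path with many forward arcs via a Dirac-type argument on the digraph obtained by taking majority orientations, then absorb the leftover vertices with absorbers that cost no forward arcs. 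This mirrors the Freschi--Lo strategy, and it is the part where the full technical weight lies.
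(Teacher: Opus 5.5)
The paper does not prove this statement. It is quoted as the theorem of Freschi and Lo, so there is no in-paper argument to compare with, and your proposal has to stand as a proof on its own. It does not: there is a genuine gap.

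The one step that would carry content is summing the inequalities $f-b\ge\epsilon$ over exchange pairs to contradict $\sigma^-(C)>n-\delta$. That step is never carried out, and by your own account it yields only about $n/2$ forward arcs. But that much is free. In an oriented graph $\sigma^+(C)+\sigma^-(C)=n$ for every Hamilton oriented cycle $C$, so $C$ or its reverse already has $\lceil n/2\rceil$ forward arcs; with Dirac's theorem this settles $\delta=\lceil n/2\rceil$ outright.

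The supporting claims are also unsupported:
\begin{itemize}
\item For fixed $v_1$, pigeonhole only guarantees $2\delta-n-1$ indices $j$ with $v_1\sim v_j$ and $v_2\sim v_{j+1}$. That is nonpositive for $\delta\le (n+1)/2$, not ``many''.
\item Making both new chords forward needs $v_j$ to be an out-neighbour of $v_1$ and $v_{j+1}$ an out-neighbour of $v_2$. These need not exist; take $v_1$ a sink, which is compatible with $v_2v_1$ being a backward arc of $C$.
\item The ``second averaging'' with ``controlled'' multiplicities is asserted without any computation.
\end{itemize}

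The fallback is a named strategy rather than an argument. No absorber is defined and no extremal structure is analysed. Also, ``majority orientations'' has no meaning in an oriented graph, where each edge already carries exactly one orientation.

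More importantly, your dichotomy targets the wrong place, because the bound $\delta$ is attained for every $\delta\in[n/2,n-1]$, not only near $n/2$. Let $V(D)=X\cup Y$ with $|Y|=n-\delta$, $Y$ independent, $D[X]$ a tournament, and every $X$--$Y$ edge oriented into $Y$. Then $\delta(D)=\delta$. On any Hamilton oriented cycle each vertex of $Y$ sits between two vertices of $X$ and is immediately followed by a backward arc, so $\sigma^+\le\delta$ always. For, say, $\delta=0.7n$ this digraph is far from both $K_{n/2,n/2}$ and two disjoint cliques, so it lies in your ``non-extremal'' case. There you then have no slack at all: the ``long path with many forward arcs'' would already have to be exact, which is the whole problem.

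Finally, regularity or absorption would at best give the result for $n\ge n_0$, while the statement is for all $n\ge 3$. What is missing is an exact mechanism, valid for every $n$, showing that a Hamilton oriented cycle with more than $n-\delta$ backward arcs can be improved. Nothing in the proposal supplies one.
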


\GGnn{It is natural, along with bounds on  directed discrepancy, to study the following optimization problem.}

\begin{problem}{\sc Maximum-Forward-Arc Hamilton Oriented Cycle/Path (MFAHOC/MFAHOP)}
	Given a digraph $D$, \GGnn{decide whether $D$ contains a Hamilton oriented cycle/path, and if it does, then}
	find a Hamilton oriented cycle/path with the greatest number of forward arcs.
\end{problem}

\GGnn{It is not hard to show that MFAHOC (and similarly, MFAHOP) is {\sf NP}-hard for $n$-vertex oriented graphs $D$ with $\delta(D) \ge \frac{n}{2}$. 
Indeed, consider an oriented graph $G$ with $n$ vertices and a vertex-disjoint tournament $T$ also with $n$ vertices. To obtain an oriented graph $D$, add all arcs from $V(G)$ to $V(T)$. Since $T$ has a Hamilton path, $D$ has a Hamilton oriented cycle with $2n-1$ forward arcs if and only if $G$ has a Hamilton path.}

\GGnn{In the rest of the paper, we will study MFAHOC/MFAHOP for generalizations of tournaments.}
\GG{A {\em semicomplete digraph} is a digraph obtained from a complete graph by replacing every edge $\{x,y\}$ with either arc $xy$ or arc $yx$ or two arcs $xy$ and $yx.$
It is well-known that every semicomplete digraph has a Hamilton path and every strongly connected semicomplete digraph has a Hamilton cycle, see e.g. \cite{BG18}. These two results make it straightforward to find a Hamilton oriented path (cycle, respectively) with maximum number of forward arcs in a semicomplete digraph $D.$

In Section \ref{sec:3}, we consider semicomplete multipartite digraphs, which are generalizations of semicomplete digraphs.} A {\em semicomplete $p$-partite (or, multipartite) digraph} is a digraph obtained from a complete $p$-partite graph ($p\ge 2$) by replacing every edge $\{x,y\}$ with either arc $xy$ or arc $yx$ or two arcs $xy$ and $yx.$ Maximal independent vertex sets of a semicomplete multipartite digraph are called its {\em partite sets}.  For a survey on paths and cycles in semicomplete multipartite digraphs, see \cite{Y18}.
In Section \ref{sec:3}, we prove two theorems which determine  the maximum of \GGnn{$\sigma^+(Q)$} over all Hamilton oriented paths (cycles, respectively) $Q$ in a semicomplete multipartite digraph $D$ provided $D$ has a Hamilton oriented path (cycle, respectively).
	
Let $D$ be a semicomplete multipartite digraph with partite sets of sizes $n_1,\dots ,n_p$. We say that $D$ satisfies the {\em HC-majority inequality} ( {\em HP-majority inequality}, respectively) if $2\max\{n_i:\ i\in [p]\} \le \sum_{i=1}^pn_i$ ($2\max\{n_i:\ i\in [p]\}\le (\sum_{i=1}^p n_i)+1,$ respectively). It is not hard to prove that a semicomplete multipartite digraph $D$ \GGnn{with at least 3 vertices and} with partite sets of sizes $n_1,\dots ,n_p$ has a Hamilton oriented cycle (path, respectively) if and only if $D$ satisfies the  HC-majority inequality (the  HP-majority inequality, respectively).

A {\em 1-path-cycle factor} in a digraph $H$ is  a spanning subdigraph of $H$ consisting of a \GGnn{non-empty} path and a \GGnn{(possibly, empty)} collection of cycles, all vertex-disjoint. Note that a Hamilton path
is a 1-path-cycle factor. \QGn{A {\em cycle factor} in a digraph $H$ is a spanning subdigraph of $H$ consisting of vertex-disjoint cycles.} We will use the following characterization of Gutin \cite{G88,G93} of semicomplete multipartite digraphs with Hamilton paths.

\begin{theorem}\label{mthp}
A semicomplete multipartite digraph  has a Hamilton path if and only if it contains a 1-path-cycle factor.  In polynomial time, one can decide whether a semicomplete multipartite digraph $D$ has a Hamilton path and find a Hamilton path in $D$, if it exists.
\end{theorem}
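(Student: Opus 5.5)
Necessity is immediate, since a Hamilton path is itself a 1-path-cycle factor (a path together with the empty collection of cycles). For sufficiency, I would start from a 1-path-cycle factor $F=P\cup C_1\cup\dots\cup C_k$ of $D$ with the minimum number $k$ of cycles. If $k=0$ we are done, so assume $k\ge 1$. The plan is to show that whenever $k\ge 1$, the path $P$ can be merged with some cycle $C_j$ into a single longer path using only arcs of $D$. This contradicts the minimality of $k$.

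The merging step relies on the semicomplete multipartite structure. Any two vertices in different partite sets are adjacent, so adjacency fails only inside a partite set. Let $P=u_1\dots u_m$ and let $C=v_1\dots v_rv_1$ be one of the cycles. If some $v_i$ has $v_iu_1\in A(D)$, then $v_{i+1}\dots v_iu_1\dots u_m$ is a path. Symmetrically, if $u_mv_i\in A(D)$, then $u_1\dots u_mv_i\dots v_{i-1}$ is a path. Otherwise every vertex of $C$ outside the partite set of $u_1$ is an in-neighbour of $u_1$, and every vertex outside the partite set of $u_m$ is an out-neighbour of $u_m$. I would then scan along $P$ using the standard insertion argument. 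Look for an index $t$ and a vertex $v_i$ with $u_tv_i\in A(D)$ and $v_{i-1}u_{t+1}\in A(D)$; when such a pair exists, $C$ can be inserted as $u_1\dots u_tv_i\dots v_{i-1}u_{t+1}\dots u_m$. To find it, follow a fixed $v_i$ along $P$: it starts as a dominator of $u_1$ and ends dominated by $u_m$, so the relation switches somewhere. The difficulty is that the switch may occur at a non-adjacent pair, meaning a vertex of the same partite set as $v_i$.

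I expect this case to be the main obstacle. A cycle in a multipartite digraph is not monochromatic, so I would exploit the fact that consecutive vertices $v_{i-1},v_i$ lie in different partite sets. Fix a position on $P$ and consider both $v_{i-1}$ and $v_i$. The goal is a "crossing" pair $(t,i)$ with $u_t\to v_i$ and $v_{i-1}\to u_{t+1}$. For $u_t$ and $u_{t+1}$ themselves, at least one of $v_{i-1},v_i$ is adjacent to each. I would argue by contradiction, supposing no crossing exists for any $i$ and $t$. Propagating the dominance relations around $C$ and along $P$ should then force every $v\in V(C)$ either to be dominated by no vertex of $P$ or to dominate none, except possibly across partite-set obstructions. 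From this I would derive that $F$ can be restructured differently: swap a cycle vertex with a path vertex of the same partite set, or merge two cycles into one. This reduces $k$ or keeps $k$ while making progress on a potential such as the length of $P$.

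If the direct argument resists, my fallback is the known route via Yeo's irreducible-cycle-subdigraph technique. Take $F$ with $P$ as long as possible, and show that if a cycle cannot be absorbed, then $P$ and $C$ are separated in a way that violates the existence of some 1-path-cycle factor structure. The polynomial-time claim follows from the constructive nature of the proof. First, find a 1-path-cycle factor in polynomial time by a matching computation: add a new vertex joined both ways to all vertices and look for a cycle factor, which reduces to bipartite perfect matching. Then each merging step is found by checking $O(n^2)$ pairs $(t,i)$, and it decreases the number of cycles, so at most $n$ iterations are needed.
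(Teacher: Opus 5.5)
\paragraph{Verdict: there is a genuine gap.} The proposal proves only the easy cases of the merging step, and that step is the whole content of the theorem.

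\paragraph{Where the gap is.} Your reduction to a single path $P=u_1\dots u_m$ and a single cycle $C$ is sound, but it does not make the problem any easier. Since $D[V(P)\cup V(C)]$ is itself a semicomplete multipartite digraph with a 1-path-cycle factor, the claim that $P$ and $C$ always span one path is equivalent to the theorem. You settle only the easy subcases: attaching $C$ at an end of $P$, or inserting $C$ as a block via a crossing pair $u_tv_i, v_{i-1}u_{t+1}\in A(D)$. The remaining case, where every switch happens across a non-adjacent pair, is handled by phrases rather than an argument: ``propagating \dots should force'', ``swap a cycle vertex with a path vertex'', ``merge two cycles'', ``a potential such as the length of $P$''. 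The fallback via Yeo's theory is likewise only named. The algorithm that checks $O(n^2)$ pairs inherits the gap, because crossing pairs need not exist. There is also a direction slip. If no vertex of $C$ dominates $u_1$, the vertices of $C$ outside the partite set of $u_1$ are \emph{out}-neighbours of $u_1$. Likewise, if $u_m$ dominates no vertex of $C$, the vertices outside the partite set of $u_m$ are \emph{in}-neighbours of $u_m$. You state the reverse.

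\paragraph{Why the sketched route cannot work.} Take partite sets $\{u_1,v_2\}$, $\{u_3,v_1\}$, $\{u_2\}$ and arcs $u_1u_2, u_2u_3, u_1u_3, v_1v_2, v_2v_1, u_1v_1, v_1u_2, u_2v_2, v_2u_3$. Let $P=u_1u_2u_3$ and $C=v_1v_2v_1$.
\begin{itemize}
\item There is no end-attachment and no crossing pair.
\item Each $v_i$ is dominated by one vertex of $P$ and dominates another, so your predicted dichotomy fails.
\item There is only one cycle, so there is nothing to merge it with.
\item Neither same-partite swap ($v_1\leftrightarrow u_3$ or $v_2\leftrightarrow u_1$) gives a path.
\item There is no intermediate factor with a longer path to move to.
\end{itemize}
Yet $u_1v_1u_2v_2u_3$ is a Hamilton path. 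It threads the vertices of $C$ into $P$ non-contiguously, which neither block insertion nor minimality of the number of cycles can detect.

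\paragraph{How the gap can be closed.} The paper does not prove Theorem~\ref{mthp}; it quotes it from Gutin. Its own tools nevertheless close the gap directly.
\begin{itemize}
\item Add a new vertex $x$ forming its own partite set, with arcs in both directions between $x$ and every vertex of $D$. Closing $P$ through $x$ gives a cycle factor of this semicomplete multipartite digraph $D^*$.
\item Suppose Theorem~\ref{main_irreducible_simple} returned an ordering $C_1,\dots,C_t$ with $t\ge 2$, and $x\in V(C_r)$.
\item If $r<t$: every $z\in V(C_t)$ has $zx\in A(D^*)$. So $C_r\rightsquigarrow C_t$ would put every successor $z^+$, that is, all of $V(C_t)$, into one partite set. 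This is impossible, since consecutive vertices of a cycle are adjacent.
\item If $r>1$: symmetrically, the arcs from $x$ to all of $V(C_1)$ together with $C_1\rightsquigarrow C_r$ would put all of $V(C_1)$ into one partite set, again impossible.
\item Since $t\ge 2$, one of these cases applies, so the ordering cannot occur. Hence $D^*$ is hamiltonian, and deleting $x$ from a Hamilton cycle gives a Hamilton path of $D$, in polynomial time.
\end{itemize}
Your matching-based step for finding a 1-path-cycle factor is fine as is.
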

	
\GGnn{For a digraph $D$ we introduce the following very useful  notion.} The {\em symmetric (0,1)-digraph} of $D$ is a digraph $\hat{D}$ obtained from $D$ by assigning cost 1 to the arcs of $D$ and adding arc $yx$ of cost 0 for every $xy\in A(D)$
such that $yx\not\in A(D)$. The {\em cost}  of a subgraph $H$ of $\hat{D}$ is the sum of the costs of arcs of $H.$

The following result determines the maximum of \GGnn{$\sigma^+(P)$} over all Hamilton oriented path $P$ in a semicomplete multipartite digraph $D$ provided $D$ satisfies the HP-majority inequality.
Theorem \ref{mthop} generalizes Theorem \ref{mthp}, and is proved in Subsection~\ref{HOP}.

\begin{theorem}\label{mthop}
Let $D$ be an $n$-vertex semicomplete multipartite digraph satisfying the HP-majority inequality and let $c^{pc}_{\max}$ be the maximum cost of a 1-path-cycle factor in $\hat{D}.$
Let $\sigma^{hp}_{\max}$ be \GGnn{the maximum of $\sigma^+(P)$ over all Hamilton oriented paths $P$ in $D$.}
Then $\sigma^{hp}_{\max}=c^{pc}_{\max}$. Both $\sigma^{hp}_{\max}$ and a Hamilton oriented path $P$ with  $\sigma^+(P)=\sigma^{hp}_{\max}$
can be found in polynomial time.
\end{theorem}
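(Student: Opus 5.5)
The inequality $\sigma^{hp}_{\max}\le c^{pc}_{\max}$ is immediate. A Hamilton oriented path $P=v_1v_2\dots v_n$ of $D$ corresponds to a Hamilton (directed) path $v_1v_2\dots v_n$ of $\hat{D}$: for each $i$, either $v_iv_{i+1}\in A(D)$, which is an arc of cost 1, or only $v_{i+1}v_i\in A(D)$, in which case $v_iv_{i+1}$ is an added arc of cost 0. By our convention the cost of this path equals $\sigma^+(P)$, and a Hamilton path is a 1-path-cycle factor. So the work is in the reverse inequality: from a 1-path-cycle factor $F$ of $\hat{D}$ of cost $c$, we must produce a Hamilton path of $\hat{D}$ of cost at least $c$.

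Note that $\hat{D}$ is itself a semicomplete multipartite digraph with the same partite sets, and in fact a complete symmetric multipartite digraph. So Theorem \ref{mthp} alone would give a Hamilton path, but it ignores costs. I would therefore mimic the proof of Theorem \ref{mthp} while tracking costs. Start from a maximum-cost 1-path-cycle factor $F$ with path $P$ and cycles $C_1,\dots,C_k$, chosen with $k$ minimal among maximum-cost factors, and suppose $k\ge 1$. The aim is to merge a cycle $C$ into $P$ without losing cost, contradicting minimality.

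The merging works as follows. Take a vertex $x$ of $C$ and an endpoint of $P$, say the terminal vertex $t$ of $P$. If $x$ and $t$ are in different partite sets, both $tx$ and $xt$ are arcs of $\hat{D}$. We replace $P\cup C$ by $P$, then $tx$, then $C$ traversed from $x$ to its predecessor $x^-$, dropping the arc $x^-x$. The added arc $tx$ has cost 1 if $tx\in A(D)$, and we may instead reverse the orientation of the whole configuration. Cost is preserved if we can find a cycle arc $x^-x$ of cost 0, or an insertion arc of cost 1. The general tool is insertion: for an arc $uv$ on $P$ and a vertex-block of $C$, replace $uv$ by $u\to$(segment of $C$)$\to v$. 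If $uv$ has cost 0, the cost does not drop. If every arc of $C$ has cost 1, then $C$ is a directed cycle of $D$. Then the cost bookkeeping reduces to the classical multipartite path–cycle merging arguments of Gutin for $D$ itself, and these only lose a cost-0 arc of $P$ or nothing.

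The main obstacle is the case analysis when all vertices of $C$ lie in the same partite sets as the relevant vertices of $P$, so that the needed arcs do not exist. This is exactly where the HP-majority inequality must enter. If a merge is impossible, counting shows that some partite set has more than $(n+1)/2$ vertices, contradicting the hypothesis. I would first try the following route. If no cheap merge exists, then each vertex of $C$ is nonadjacent (same partite set) to alternate vertices of $P$. I expect this forces $P$ to alternate between a single partite set $V_1$ and the others, with both endpoints in $V_1$, and $C$ to meet $V_1$ heavily. Iterating over all cycles then gives $|V_1|>(n+1)/2$.

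Polynomiality follows from two facts. First, a maximum-cost 1-path-cycle factor is computable by a maximum-weight assignment: add a dummy vertex joined both ways to everything with cost 0, and find a maximum-cost cycle factor via bipartite matching. Second, each merge step strictly decreases $k$ and is performed in polynomial time.
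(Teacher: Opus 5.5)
\textbf{There is a genuine gap: the cost-preserving merge is never established.}

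Your upper bound $\sigma^{hp}_{\max}\le c^{pc}_{\max}$ is fine. So is your computation of a maximum-cost 1-path-cycle factor via a dummy vertex and an assignment problem. The gap is the reverse inequality. Merging a cycle $C$ into the path without losing cost is the whole content of the theorem, and you do not prove it.

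Some of your local claims are also wrong as stated:
\begin{itemize}
\item Inserting $C$ into a cost-0 arc $uv$ of the path deletes $uv$ and a cycle arc $x^-x$, and adds $ux$ and $x^-v$. The cost drops by one when $x^-x\in A(D)$ but $ux,x^-v\notin A(D)$.
\item Reversing the traversal direction of a configuration does not preserve cost. An arc of $D$ traversed backwards is a cost-0 arc of $\hat{D}$, unless it lies on a 2-cycle of $D$.
\end{itemize}

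More seriously, the lever you propose for the hard case cannot work. The HP-majority inequality holds automatically once $\hat{D}$ has a 1-path-cycle factor. Consecutive vertices lie in different partite sets, so a path on $m$ vertices meets each partite set in at most $\lceil m/2\rceil$ vertices, and a cycle on $k$ vertices in at most $k/2$. Moreover, if costs are ignored, $P\cup C$ can always be rearranged into a single path of the complete symmetric multipartite digraph $\hat{D}$. What can block a cost-preserving merge is the orientation pattern of the arcs of $D$, not the sizes of the partite sets. So no counting of $|V_i|$ will give the contradiction you expect, and the alternating structure you hope for is unproved.

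\textbf{The missing idea} is to use Theorem~\ref{mthp} as a black box on an auxiliary digraph, rather than re-proving it with costs.
\begin{itemize}
\item Take a maximum-cost 1-path-cycle factor $F$ of $\hat{D}$. Let $D_F$ be the spanning subdigraph of $\hat{D}$ with $A(D_F)=A(D)\cup A(F)$.
\item $D_F$ contains $D$, and its extra arcs join distinct partite sets, so $D_F$ is semicomplete multipartite.
\item $D_F$ contains the 1-path-cycle factor $F$, so by Theorem~\ref{mthp} it has a Hamilton path $P$, computable in polynomial time.
\item Every cost-0 arc of $D_F$ is a cost-0 arc of $F$, and $P$ and $F$ both have $n-1$ arcs. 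Hence the cost of $P$ is at least the cost of $F$.
\end{itemize}

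Your remark that an all-cost-1 cycle can be handled by Gutin's argument is close to this. But it only works if you apply Gutin's theorem to $D$ together with the arcs of $F$, since the path of $F$ need not lie in $D$. Once you do that globally, the minimal-$k$ setup and the whole case analysis become unnecessary.
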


While Bang-Jensen, Gutin and Yeo \cite{BGY98} proved that in polynomial time one can decide whether a semicomplete multipartite digraph has a Hamilton cycle, no characterization of hamiltonian semicomplete multipartite digraphs has been obtained. The next theorem determines $\sigma^+(C)$ for a Hamilton oriented cycle $C$ in a semicomplete multipartite digraph $D$ satisfying the HC-majority inequality.

\begin{theorem}\label{mthoc}
Let $D$ be an $n$-vertex semicomplete multipartite digraph \GGnn{with $n\ge 3$} satisfying the HC-majority inequality. \QG{Let $c^{cf}_{\max}$ be the maximum cost of a cycle factor in $\hat{D}.$} Let $\sigma^{hc}_{\max}$ be \GGnn{the maximum of $\sigma^+(C)$ over all Hamilton oriented cycles $C$ in $D$.}
Then $\sigma^{hc}_{\max}=c^{cf}_{\max}$ unless $c^{cf}_{\max}=n$ and $D$ is not hamiltonian, in which case $\sigma^{hc}_{\max}=n-1.$
Both $\sigma^{hc}_{\max}$ and a Hamilton oriented cycle $C$ with  $\sigma^+(C)=\sigma^{hc}_{\max}$ can be found in polynomial time.
\end{theorem}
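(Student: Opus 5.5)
The inequality $\sigma^{hc}_{\max}\le c^{cf}_{\max}$ is immediate. Take a Hamilton oriented cycle $C=v_1v_2\dots v_nv_1$ of $D$. For each $i$, use the arc $v_iv_{i+1}$ of $\hat D$: it has cost $1$ if it is a forward arc of $C$, and cost $0$ otherwise (it exists in $\hat D$ because $v_{i+1}v_i\in A(D)$). This gives a Hamilton cycle of $\hat D$, hence a cycle factor, of cost $\sigma^+(C)$. If $c^{cf}_{\max}=n$ and $D$ is not hamiltonian, then $\sigma^+(C)=n$ is impossible, so $\sigma^{hc}_{\max}\le n-1$. The real work is the reverse inequality. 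The plan is to start from a maximum-cost cycle factor $\mathcal F$ of $\hat D$ and transform it into a single Hamilton cycle of $\hat D$ whose cost is not smaller, except in the special case above, where we lose exactly one.

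The main tool is the following merging step. Regard $\hat D$ as a semicomplete multipartite digraph; it is in fact complete multipartite, with every pair from different partite sets joined in both directions and at least one of the two arcs of cost $1$. Take two cycles $C_1,C_2$ of the factor. If $C_1$ contains a cost-$0$ arc $x x^+$, and some vertex $y$ of $C_2$ is in a different partite set from both $x$ and $x^+$, then we can insert $C_2$ into $C_1$. Concretely, replace $xx^+$ by the path $x\,y^+ \dots y\,x^+$, or a variant chosen so that the two new arcs are cost-$1$ arcs of $\hat D$ wherever possible. The cost then does not drop. If every arc of $\mathcal F$ has cost $1$, $\mathcal F$ is a cycle factor of $D$ itself. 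In that case I would invoke the known results on semicomplete multipartite digraphs (Bang-Jensen, Gutin and Yeo; Yeo's irreducible cycle factor theory) to show the following: either $D$ is hamiltonian, giving cost $n$, or we can break one arc and merge everything into a Hamilton cycle of $\hat D$ with exactly one cost-$0$ arc, which gives $n-1$. The second alternative uses the HC-majority inequality to guarantee the Hamilton oriented cycle exists. Note the exceptional case requires $c^{cf}_{\max}=n$, so when $c^{cf}_{\max}<n$ some cost-$0$ arc is available to "absorb" the merging loss.

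The structured approach is a cost-preserving version of Theorem \ref{mthop}'s argument, applied as follows:
\begin{enumerate}
\item Remove a cost-$0$ arc from a cycle of $\mathcal F$ to obtain a 1-path-cycle factor of cost $c^{cf}_{\max}$.
\item Apply the path-extension/merging procedure from the proof of Theorem \ref{mthop} to get a Hamilton path of $\hat D$ of cost at least $c^{cf}_{\max}$. Maximality of $c^{cf}_{\max}$ and of the 1-path-cycle factor cost control the intermediate steps.
\item Close the Hamilton path into a Hamilton cycle of $\hat D$.
\end{enumerate}
Closing requires the endpoints to lie in different partite sets, and the closing arc may cost $0$. This costs nothing, because we removed a cost-$0$ arc at the start.

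The main obstacle is step (3) together with the HC-majority condition. The endpoints of the Hamilton path may lie in the same partite set, especially when one partite set has exactly $n/2$ vertices. In that tight case every Hamilton cycle alternates between the large set and the rest, and I would treat it separately via a bipartite-like matching argument. Otherwise, I would rotate the path (Pósa-type rotations, using the complete multipartite structure of $\hat D$) to change an endpoint without losing cost. Polynomial-time computability follows because $c^{cf}_{\max}$ is a maximum-weight cycle factor, i.e.\ an assignment problem, and each merging step is constructive.
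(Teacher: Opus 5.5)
\textbf{There is a genuine gap: step 3 (closing the Hamilton path) is unproved, and the remedies you suggest do not work.}

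Your upper bound is correct. Steps 1--2 are also sound if read as the paper reads them. Let $F$ be a maximum-cost cycle factor of $\hat D$, $a$ a cost-$0$ arc of $F$, and $D_F$ the spanning subdigraph of $\hat D$ with $A(D_F)=A(D)\cup A(F)$. Then $D_F-a$ is still semicomplete multipartite. Any Hamilton path in it has at most $z-1$ zero-cost arcs, where $z$ is the number of zero-cost arcs in $F$, so its cost is at least $c^{cf}_{\max}$. Note that cost is controlled by the choice of host digraph, not by maximality arguments.

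The whole content of the theorem lies in your step 3. Theorem \ref{mthp} only gives \emph{some} Hamilton path of $D_F-a$, and its end-vertices may lie in one partite set.
\begin{itemize}
\item A P\'osa rotation $p_1\cdots p_ip_np_{n-1}\cdots p_{i+1}$ traverses $p_{i+1}\cdots p_n$ backwards. In $D_F-a$ the reversed arcs need not exist. In $\hat D$ they do exist, but each one-way arc $p_jp_{j+1}$ of $D$ becomes a cost-$0$ arc, so ``without losing cost'' is exactly what fails.
\item The matching argument for $|V_i|=n/2$ is not given. Even in that case a Hamilton path can start and end in $V_i$, e.g.\ $a\,c\,d\,b$ with $a,b\in V_i$ and $n=4$.
\item The exceptional case $c^{cf}_{\max}=n$ needs the same unproved statement, for which you only point to ``known results''.
\item Your local merging step has a similar defect. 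Trading $xx^+$ (cost $0$) and $yy^+$ (cost $1$) for $xy^+$ and $yx^+$ loses $1$ when both new arcs have cost $0$.
\end{itemize}

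\textbf{The missing idea} is the purely structural Theorem \ref{ham_path_diff_ends}: if a semicomplete multipartite digraph has a 1-path-cycle factor whose path has its ends in different partite sets, then it has a Hamilton path with ends in different partite sets. The paper uses it twice.
\begin{itemize}
\item Applied to $D_F-a$ and $F-a$ (the path's ends are the ends of $a$), it gives a Hamilton path of cost at least $c^{cf}_{\max}$ that can be closed. This settles $c^{cf}_{\max}<n$.
\item Applied to $D$ and $F$ minus any arc, it gives $\sigma^{hc}_{\max}=n-1$ when $c^{cf}_{\max}=n$ and $D$ is not hamiltonian.
\end{itemize}

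Its proof is where Yeo's theory actually enters. Add the closing arc and take a cycle factor ordered so that $C_i\rightsquigarrow C_j$ for $i<j$ (Theorem \ref{main_irreducible_simple}). Open the cycle $C_r$ containing the closing arc (or any arc, if the closing arc is unused). Then absorb $C_{r+1},\dots,C_t$ and afterwards $C_{r-1},\dots,C_1$ one at a time.
\begin{itemize}
\item If no arc goes from the next cycle to the terminal vertex $p'_m$, append that cycle starting from a suitably chosen vertex.
\item If $zp'_m$ is such an arc, weak domination puts $p'_{m-1}$ and $z^+$ in one partite set. Hence both $p'_1\cdots p'_{m-1}zp'_mC_{r+1}[z^+,z^-]$ and $p'_1\cdots p'_mC_{r+1}[z^+,z]$ are paths. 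Since $z$ and $z^-$ lie in different partite sets, one of these two paths has its ends in different partite sets.
\end{itemize}
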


Our proof of Theorem \ref{mthoc} uses the following theorem, which may be of independent interest.

\begin{theorem} \label{ham_path_diff_ends}
Let $D$ be a semicomplete multipartite digraph and let ${F}$ be a 1-path-cycle factor in $D$.
Let $P$ denote the path in ${F}$ and assume the initial and terminal vertices in $P$ belong to different partite sets.
Then $D$ contains a Hamilton path whose initial and terminal vertices belong to different partite sets.
\end{theorem}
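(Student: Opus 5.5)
We prove the statement by induction on the number of cycles in ${F}$, merging cycles into the path one at a time. Throughout we keep the invariant that the two end-vertices of the current path lie in different partite sets. If ${F}$ has no cycles, $P$ itself is the desired Hamilton path. Otherwise let $P=x_1x_2\dots x_m$, with $x_1$ and $x_m$ in different partite sets, and let $C$ be a cycle of ${F}$. The goal is to construct a path $P'$ with $V(P')=V(P)\cup V(C)$ whose end-vertices again lie in different partite sets. Then $P'$ together with the remaining cycles of ${F}$ is a 1-path-cycle factor with fewer cycles, and induction finishes the proof. Alternatively, one could first apply Theorem \ref{mthp} to get some Hamilton path and then repair its ends. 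We prefer the inductive merging because it keeps control of the end-vertices at every step.

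\emph{Merging step.} Let $C=y_1y_2\dots y_ky_1$. A cycle in a semicomplete multipartite digraph must meet at least two partite sets, so $k\ge 2$ and $C$ has at least two partite sets. We use the standard insertion lemmas for semicomplete multipartite digraphs.

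First suppose some vertex $y_j$ is adjacent to $x_m$ with $x_my_j\in A(D)$. Then the path $x_1\dots x_m y_j y_{j+1}\dots y_{j-1}$ has end-vertices $x_1$ and $y_{j-1}$. If $y_{j-1}$ is not in the partite set of $x_1$, we are done. The case where $y_{j-1}$ is dominated by $x_1$ is handled symmetrically, by prepending $C$ in front of $x_1$.

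Otherwise, we look for an index $i$ and a vertex $y_j$ with $x_iy_j\in A(D)$ and $y_{j-1}x_{i+1}\in A(D)$, and insert $C$ between $x_i$ and $x_{i+1}$. This keeps both end-vertices $x_1,x_m$, so the invariant is automatic.

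Such an index exists unless every arc between $P$ and $C$ respects a "$C$ dominates an initial segment of $P$ and is dominated by a final segment" pattern. More precisely, for each $y\in V(C)$, consider the vertices of $P$ adjacent to $y$ and record which of them dominate $y$ and which are dominated by $y$. Since $x_1$ and $x_m$ lie in different partite sets, each $y\in V(C)$ is adjacent to at least one of them.

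\emph{Main obstacle.} The main difficulty is the residual case: every vertex of $C$ is dominated by nothing at the end of $P$ and dominates nothing at the start, and no crossing pair $(x_i,x_{i+1})$ exists. Here I would argue as follows.

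If some $y_j$ dominates $x_1$, then since no $x_m\to C$ arc exists, $x_m$ is either non-adjacent to $y_{j-1}$ or dominated by it. In the latter case we form the cycle-like structure $y_j\dots y_{j-1}$ followed by $x_1\dots x_m$. If instead every $y$ dominates $x_1$, choose $j$ so that the new initial vertex $y_j$ is not in the partite set of $x_m$. This is possible because $C$ meets at least two partite sets.

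If no vertex of $C$ dominates $x_1$ and no vertex is dominated by $x_m$, then, by adjacency to the ends, each $y$ is dominated by $x_1$ or dominates $x_m$. Scanning along $P$ for the last $x_i$ that dominates some $y$ then produces a crossing pair, because $x_{i+1}$ is either non-adjacent to $y$ or dominated by it. Non-adjacency forces $x_{i+1}$ to lie in the same partite set as $y$, and we then use $y$'s neighbour on $C$ instead, which lies in a different partite set.

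This case analysis, especially the handling of non-adjacent pairs inside the same partite set, is where care is needed. I expect to enumerate it via the partite-set membership of $x_1$, $x_m$ and consecutive vertices of $C$.
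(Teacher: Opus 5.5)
\textbf{There is a genuine gap in your merging step.} The induction framework is fine in principle. The one-cycle merge you need is true, since it is the statement itself applied to $D[V(P)\cup V(C)]$ with the factor $P\cup C$. What fails is your proof of it.

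In the residual case you claim the following: if $x_m$ has no out-neighbour on $C$ and no vertex of $C$ dominates $x_1$, then the last $x_i$ dominating some $y_j$ yields a crossing pair $x_iy_j,\,y_{j-1}x_{i+1}$, possibly after passing to a neighbour on $C$. This is false. Appending, prepending and inserting $C$ as one block can all be impossible.

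\emph{Counterexample.} Take partite sets $V_1=\{x_1,y_2\}$, $V_2=\{x_2\}$, $V_3=\{x_3,y_1\}$ and arcs $x_1x_2$, $x_2x_3$, $x_1x_3$, $y_1y_2$, $y_2y_1$, $x_1y_1$, $y_1x_2$, $x_2y_2$, $y_2x_3$. Let $P=x_1x_2x_3$ and $C=y_1y_2y_1$.
\begin{itemize}
\item No arc goes from $x_3$ to $C$, and no arc goes from $C$ into $x_1$.
\item The only arcs from $P$ to $C$ are $x_1y_1$ and $x_2y_2$. Their required partners $y_2x_2$ and $y_1x_3$ are both absent: the first is reversed, and the second pair is non-adjacent.
\item Your scan stops at $x_2\to y_2$. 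Passing to the neighbour $y_1$ of $y_2$ fails because $y_1\to x_2$.
\end{itemize}
Nevertheless $x_1y_1x_2y_2x_3$ is a Hamilton path with ends in $V_1$ and $V_3$. It is the only Hamilton path, since $x_1$ is a source and $x_3$ a sink. So $C$ must be split and its vertices interleaved with $P$, which none of your three operations can do.

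\emph{A further hole in your first case.} Suppose $x_m\to y_j$ but $y_{j-1}$ lies in the partite set of $x_1$. Prepending then needs an arc from $C$ into $x_1$, which need not exist; you conflate being in $x_1$'s partite set with being dominated by $x_1$. Your case split also never falls back to insertion in this situation.

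\textbf{What the paper does instead.} The missing ingredient is a structure under which simple local merges always work. The paper does not merge the cycles of the given ${F}$ at all.
\begin{itemize}
\item It adds the arc $p_\ell p_1$. This is allowed because the ends of $P$ lie in different partite sets, so the result is still semicomplete multipartite and has a cycle factor.
\item It then invokes Yeo's theorem to obtain either a Hamilton cycle or a cycle factor $C_1,\ldots,C_t$ with $C_i\rightsquigarrow C_j$ for all $i<j$.
\item Deleting one arc from some $C_r$ gives a path whose ends lie in different partite sets.
\item Weak domination guarantees that whenever some $z\in V(C_{r+1})$ dominates the terminal vertex $p'_m$, also $p'_{m-1}\to z$ and $p'_m\to z^+$. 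Hence one of the two explicit paths $P_1,P_2$ absorbs $C_{r+1}$ while keeping the ends in different partite sets.
\item The cycles $C_{r-1},\ldots,C_1$ are absorbed at the front in the same way.
\end{itemize}
To rescue your route, you would need a genuine multi-insertion argument for an arbitrary cycle that also tracks the partite sets of the end-vertices. That is essentially the content Yeo's theorem supplies.
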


In turn, our proof of Theorem \ref{ham_path_diff_ends} is based on a structural result of Yeo \cite{Y97}, \GGnn{see Theorem \ref{main_irreducible_simple} in the next section.}

\vspace{1mm}

\GGn{For a digraph $D$ and $x\in V(D)$, let $N^+(x)=\{y\in V(D):\ xy\in A(D)\}$ and $N^-(x)=\{y\in V(D):\ yx\in A(D)\}.$
In Section \ref{sec:lsd} we consider locally semicomplete digraphs.
A digraph $D$ is {\em locally semicomplete} if for every $x\in V(D)$, both $D[N^+(x)]$ and $D[N^-(x)]$ are semicomplete digraphs.
A digraph $D$ is connected if $U(D)$ is connected.
A digraph $D$ is {\em strong} if there is a path from $x$ to $y$ for every ordered pair $x,y$ of vertices of $D$.
A {\em strong component} of a digraph $D$ is a maximal strong subgraph of $D$.
It is well known \cite{B90,B} that every connected locally semicomplete digraph has a Hamilton path.
This implies that the problem of finding a Hamilton oriented path is trivial.
Also, it is well known \cite{B90,B} that every strong semicomplete digraph has a Hamilton cycle.

However, this does not imply a characterization of Hamilton oriented cycles with \AYn{the maximum possible} number of forward arcs.
We provide such a characterization in Section \ref{sec:lsd}.
It is easy to see that we can order strong components $H_1,\dots ,H_{\ell}$ of a non-strongly connected digraph $H$  such that there is no arc from \AYn{$H_j$ to $H_i$} for $i<j$.
This is called an {\em acyclic ordering} of strong components of $H$.
For a non-strongly connected locally semicomplete digraph, it is well known \cite{B90,B} that an acyclic ordering of strong components is unique and has a few nice properties described in Section \ref{sec:lsd}.

Let $S,T$ be vertex-disjoint subgraphs of a digraph $H$. A path $P=p_1\dots p_t$ of $H$ is an $(S,T)$-{\em path} if $p_1\in \YLn{V(S)}$, $p_t\in \YLn{V(T)}$ and all other vertices of $P$ are outside of $S\cup T$. Let $d(S,T)$ denote the length of a shortest $(S,T)$-path.
Here is our characterization.


\begin{theorem}\label{thm: hoc in LSD}
Let $D$ be a connected locally semicomplete digraph on $n\ge 3$ vertices. Let $C_1, C_2,\dots, C_{\ell}$ be the acyclic ordering of strong components of $D$.
\GGnn{Let $\sigma^{hc}_{\max}=0$ if $D$ has no Hamilton oriented cycle}, and let $\sigma^{hc}_{\max}$ be the \GGnn{maximum of $\sigma^+(R)$ over all Hamilton oriented cycles} $R$ in $D$, \GGnn{otherwise}. The following now holds.

\begin{itemize}
\item If $D$ is not strong and $U(D)$ is 2-connected, then $\sigma^{hc}_{\max}= n - d(C_1,C_{\ell})$.
\item If $D$ is not strong and $U(D)$ is not 2-connected, then \GGnn{$\sigma^{hc}_{\max}=0.$}
\item If $D$ is strong, then $\sigma^{hc}_{\max}= n$.
\end{itemize}

Furthermore, in polynomial time, we can find a Hamilton oriented cycle of $D$ with the maximum number of forward arcs \GGnn{provided $D$ has such an oriented cycle}. 
\end{theorem}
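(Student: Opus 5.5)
The plan is to treat the three cases separately, relying on the standard structure of locally semicomplete digraphs: every connected locally semicomplete digraph has a Hamilton path, and every strong one has a Hamilton cycle. For a non-strong connected $D$ we also use that the acyclic ordering $C_1,\dots,C_\ell$ is unique, that $C_i\Rightarrow C_{i+1}$ (every vertex of $C_i$ dominates every vertex of $C_{i+1}$), and that any arc from $C_i$ to $C_j$ with $j>i$ forces suitable domination relations in between.

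The strong case and the non-2-connected case are immediate.
\begin{itemize}
\item If $D$ is strong, a Hamilton cycle gives $\sigma^{hc}_{\max}=n$, found in polynomial time.
\item If $U(D)$ is not 2-connected, then $U(D)$ has no Hamilton cycle, so $D$ has no Hamilton oriented cycle and $\sigma^{hc}_{\max}=0$ by convention.
\end{itemize}

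Now suppose $D$ is not strong and $U(D)$ is 2-connected; this is the main case.

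\textbf{Upper bound.} Let $R$ be a Hamilton oriented cycle. The forward arcs of $R$ split into maximal directed paths, and the backward arcs separate them. Since $R$ is not a directed cycle, there is at least one backward arc. Consider a forward segment containing a vertex of $C_1$ and follow the forward direction of $R$. Because $R$ visits everything and there are no arcs from later components to earlier ones, traversing $R$ forward from $C_\ell$ back to $C_1$ requires backward arcs. More precisely, the cyclic sequence of $R$ restricted to forward-arc runs must, on the return from $C_\ell$ to $C_1$, reverse each arc of some $(C_1,C_\ell)$-path in the underlying sense. I would formalize this as follows: the backward arcs of $R$, reversed, together with the forward arcs of $R$, form a closed walk. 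Deleting the backward arcs leaves directed paths covering $V(D)$, and every forward path from $C_\ell$ stays in $C_\ell$. So $R$ minus its backward arcs, with those arcs reversed, yields a directed walk from $C_\ell$ to $C_1$ in the reversed digraph; its reversal is a $(C_1,C_\ell)$-walk in $D$ whose arcs are exactly the backward arcs of $R$. Hence $\sigma^-(R)\ge d(C_1,C_\ell)$. I expect making this rigorous to be the main obstacle. The argument I would use first is: contract each forward run, and note that the backward arcs, read in cyclic order, must contain a directed path in $D$ from a vertex of $C_1$ to a vertex of $C_\ell$ (possibly after shortcutting), since forward runs never go from a later component to an earlier one.

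\textbf{Construction.} Take a shortest $(C_1,C_\ell)$-path $Q=q_0q_1\dots q_d$, whose intermediate vertices lie in middle components, and which is minimal in the sense that $q_i$ and $q_j$ are nonadjacent for $j\ge i+2$. Then show that $D-\{q_1,\dots,q_{d-1}\}$ has a Hamilton path from some vertex $x$ adjacent to $q_{d-1}$'s side to some vertex $y$. The aim is to build a Hamilton directed path $W$ of $D$ starting in $C_1$ at $q_0$'s neighbourhood and ending in $C_\ell$, with the vertices of $Q$ interleaved, so that closing $W$ with the reversed arcs of $Q$ gives exactly $d$ backward arcs. Concretely, I would order the components and insert the vertices of each $C_i$ via Hamilton paths, using that each strong component of a locally semicomplete digraph is Hamiltonian-connected-like (it has Hamilton paths with flexible endpoints via round decomposition). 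I would also use that $C_i\Rightarrow C_{i+1}$ lets directed paths concatenate across consecutive components. The 2-connectivity of $U(D)$ ensures $d$ is defined and that the vertices of $Q$ can be removed without breaking the chain.

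Polynomiality follows from BFS for $d(C_1,C_\ell)$ and from the constructive Hamilton path and cycle algorithms for locally semicomplete digraphs.
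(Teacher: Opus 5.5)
Your strong and non-2-connected cases are fine. Both halves of the main case, however, have genuine gaps.

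\textbf{Upper bound.} The backward arcs of $R$ do not form a $(C_1,C_\ell)$-walk in $D$, because they are separated by forward runs. Your fallback, that shortcutting works ``since forward runs never go from a later component to an earlier one'', does not suffice. That monotonicity holds in any digraph with an acyclic ordering of its strong components, and there an oriented $(C_1,C_\ell)$-path can have far fewer than $d(C_1,C_\ell)$ forward arcs. The missing ingredient is the locally semicomplete domination property: an arc from $C_i$ to $C_k$ forces $C_t\Rightarrow C_k$ for all $i\le t<k$. The paper uses it as follows. Write $cn(x)=r$ for $x\in V(C_r)$. Along any oriented $(C_1,C_\ell)$-path $p_1\dots p_t$, take the indices $j_k$ where $cn$ reaches a new record. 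The arc between $p_{j_k-1}$ and $p_{j_k}$ must be forward. Since $cn(p_{j_k-1})\le cn(p_{j_{k-1}})<cn(p_{j_k})$, domination gives $p_{j_{k-1}}p_{j_k}\in A(D)$. So the record vertices form a directed $(C_1,C_\ell)$-walk, and there are at least $d(C_1,C_\ell)$ forward arcs. Applied to the reversal of a $(C_\ell,C_1)$-segment of $R$, this yields $\sigma^-(R)\ge d(C_1,C_\ell)$.

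\textbf{Construction.} You claim that 2-connectivity lets you delete the interior of any shortest chordless $(C_1,C_\ell)$-path without disconnecting $D$. This is false, so the choice of path is essential. Let $V(D)=\{1,\dots,6\}$ with arcs $12,13,23,24,34,35,45,46,56$. This is a non-strong locally semicomplete digraph with singleton components ordered $1,\dots,6$, $U(D)$ is 2-connected, and $d(C_1,C_6)=3$. The path $1\,3\,4\,6$ is shortest and chordless, yet $D-\{3,4\}$ splits into $\{1,2\}$ and $\{5,6\}$.

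The paper chooses the path greedily: each $p_{i+1}$ is an out-neighbour of $p_i$ of largest component index (here $1\,3\,5\,6$). It then proves $D'=D-\{p_2,\dots,p_{q-1}\}$ is connected. Take an unreachable $y$ of minimum $cn$. Then $C_{cn(y)-1}=\{p_i\}$, and 2-connectivity forces $C_{cn(y)-2}\Rightarrow C_{cn(y)}$, so $C_{cn(y)-2}=\{p_{i-1}\}$. Now $p_{i-1}y\in A(D)$ contradicts greediness.

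Your target object is also inconsistent: closing a Hamilton path of $D$ that contains $Q$ with the reversed $Q$ revisits vertices. What you need is a Hamilton path of $D'$ from $p_1$ to $p_q$. It exists by Bang-Jensen's theorem, and it necessarily starts in $C_1$ and ends in $C_\ell$. It can be rerouted through a Hamilton cycle of the semicomplete component $C_1$, because all vertices of $C_1$ have the same out-neighbours outside $C_1$; the end is handled symmetrically in $C_\ell$ using in-neighbours. No round decompositions or Hamiltonian-connectedness are needed.
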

}

\GGn{We conclude the paper in Section \ref{sec:disc}, where we discuss the complexity of finding a Hamilton oriented cycle with maximum number of forward arcs
in other classes of digraphs. }

\section{\GGn{Hamilton oriented paths and cycles with maximum number of forward arcs in semicomplete multipartite digraphs}}\label{sec:3}
The following result gives a characterization of semicomplete multipartite digraphs with a Hamilton oriented cycle (path, respectively).
\begin{proposition}
A semicomplete multipartite digraph $D$ \GGnn{with at least 3 vertices and} with partite sets of sizes $n_1,n_2,\dots ,n_p$ has a Hamilton oriented cycle (path, respectively) if and only if $D$ satisfies the  HC-majority inequality (the  HP-majority inequality, respectively).
\end{proposition}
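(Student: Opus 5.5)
Since a digraph has a Hamilton oriented cycle (path) if and only if its underlying graph has a Hamilton cycle (path), the statement reduces to one about the complete multipartite graph $U'=K_{n_1,\dots,n_p}$. The underlying multigraph $U(D)$ differs from $U'$ only by parallel edges, which are irrelevant for hamiltonicity. Write $n=\sum_{i=1}^p n_i$ and let $V_1,\dots,V_p$ be the partite sets, ordered so that $n_1\ge n_2\ge\dots\ge n_p$.

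\emph{Necessity.} Each partite set $V_i$ is independent. In a Hamilton cycle the $n_i$ vertices of $V_i$ are pairwise non-consecutive. Hence between any two cyclically consecutive occurrences of $V_i$-vertices there is at least one vertex outside $V_i$, which gives $n-n_i\ge n_i$, i.e.\ the HC-majority inequality. For a Hamilton path the same gap count gives $n-n_i\ge n_i-1$, i.e.\ $2n_i\le n+1$.

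\emph{Sufficiency for paths.} I would use an explicit interleaving. List all vertices in a sequence grouped by partite set, $V_1$ first, then $V_2$, and so on, as $x_1,\dots,x_n$. Then take the order
\[
x_1,\;x_{\lceil n/2\rceil+1},\;x_2,\;x_{\lceil n/2\rceil+2},\;\dots
\]
alternating between the first half and the second half of the sequence. Consecutive vertices in this order are either $x_j,x_{j+\lceil n/2\rceil}$ or $x_{j+\lceil n/2\rceil},x_{j+1}$, so their indices differ by $\lceil n/2\rceil$ or $\lceil n/2\rceil-1$. Two vertices in the same $V_i$ have indices in a consecutive block of length $n_i$, so their index difference is at most $n_i-1$. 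Under the HP inequality $n_i\le (n+1)/2$, so $n_i-1\le \lceil n/2\rceil-1$. The main obstacle is that a gap of exactly $\lceil n/2\rceil-1$ could then still join two vertices of the same part. To rule this out I would check the boundary case carefully, and if the check fails, switch to a cleaner inductive argument instead. I expect the induction to be the more reliable route.

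\emph{Sufficiency for cycles, by induction on $n$.} Assume $n\ge 3$ and $2n_1\le n$. If $p=2$, then $n_1=n_2$ and an alternating cycle works. If $p\ge 3$, pick one vertex from each of the two largest parts $V_1,V_2$ and contract them into a single edge segment. Equivalently, remove one vertex from each of $V_1$ and $V_2$; the remaining $n-2$ vertices still satisfy $2\max_i n_i'\le n-2$. This needs a short case check when $n_1=n_2=n_3$ or when $n-2<3$; for small base cases I would build the cycle directly. By induction there is a Hamilton cycle on the remaining vertices, and I reinsert the removed pair $v_1\in V_1$, $v_2\in V_2$ as a segment $v_1v_2$ between two consecutive cycle vertices $a,b$ with $a\notin V_1$ and $b\notin V_2$. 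Such a consecutive pair exists because not every cycle edge can have its first end in $V_1$ or its second end in $V_2$, since $V_1$ and $V_2$ are independent and $p\ge 3$. The same induction, with the path inequality $2\max_i n_i\le n+1$, handles paths. Alternatively, paths follow from cycles: add a dummy vertex forming a new partite set, find a Hamilton cycle, and delete the dummy vertex. The HC inequality holds for the enlarged graph because $2n_i\le n+1$. This is cleaner, so I would use it instead of the interleaving.
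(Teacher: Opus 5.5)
Your proof is correct. For paths it matches the paper's argument: add a vertex adjacent to everything as a new partite set of size $1$ and apply the cycle case. For cycles, however, your sufficiency argument takes a different route.

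The paper simply observes that under the HC-majority inequality the graph $K_{n_1,\dots,n_p}$ has minimum degree $n-\max_i n_i\ge n/2$, so Dirac's theorem gives a Hamilton cycle at once. You instead run an elementary induction: delete one vertex from each of the two largest parts, recurse, and reinsert the pair as a segment. Your route is self-contained and explicitly constructive. The price is some case checks that you flagged but did not carry out; they do go through.
\begin{itemize}
\item When $n_1=n_2=n_3=m$ you need $n-2\ge 2m$. This follows from $n\ge 3m$ for $m\ge 2$, and $m=1$ means the graph is $K_n$.
\item The base cases $n\in\{3,4\}$ are just $K_3$, $K_4$, $K_{2,1,1}$ and $K_{2,2}$.
\item The reinsertion step is sound and needs only that $V_2$ is independent ($p\ge 3$ is not used there). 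Take any cycle vertex $w_j\notin V_1$. If $w_{j+1}\notin V_2$, insert between $w_j$ and $w_{j+1}$. Otherwise $w_{j+1}\in V_2$, so $w_{j+1}\notin V_1$ and $w_{j+2}\notin V_2$, and you insert between $w_{j+1}$ and $w_{j+2}$.
\end{itemize}

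Finally, your worry about the interleaving is unfounded, given that you list parts in nonincreasing order of size. Put $k=\lceil n/2\rceil$ and consider a consecutive pair $x_{j+k},x_{j+1}$ with $1\le j\le k-1$. Both could lie in one part $V_i$ only if $n_i=k$ and $V_i$ occupies exactly positions $j+1,\dots,j+k$. Since $j\ge 1$, this forces $V_i\neq V_1$. Then the parts preceding $V_i$ have total size at least $n_1\ge n_i=k>j$, a contradiction. So that construction also works, and under the HC inequality it even closes into a Hamilton cycle.
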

\begin{proof}
Observe that $D$ has a Hamilton oriented cycle if and only if $K_{n_1,n_2,\dots ,n_p}$ has a Hamilton cycle. Note that if $K_{n_1,n_2,\dots ,n_p}$ does not satisfy the  HC-majority inequality, then it has no Hamilton cycle. If $K_{n_1,n_2,\dots ,n_p}$ satisfies the  HC-majority inequality, then it has a Hamilton cycle by Dirac's theorem \cite{D52}.

Observe that $D$ has a Hamilton oriented path if and only if the digraph $D'$ obtained from $D$ by adding a new vertex $x$ and all arcs of the form $xv,vx$ for $v\in V(D)$ has a Hamilton oriented cycle which is if and only if $K_{n_1,n_2,\dots ,n_p,1}$ has a Hamilton cycle. Thus, $D$ has a Hamilton oriented path if and only if it satisfies the HP inequality.
\end{proof}


\subsection{Hamilton Oriented Paths} \label{HOP}

Let $D$ be a digraph. Recall that its {\em symmetric (0,1)-digraph} is a digraph $\hat{D}$ obtained from $D$ by assigning cost 1 to the arcs of $D$ and adding arc $yx$ of cost 0 for every $xy\in A(D)$
such that $yx\not\in A(D)$.
The {\em cost}  of a subgraph $H$ of $\hat{D}$ is the sum of the costs of arcs of $H.$ The following theorem characterizes the discrepancy of Hamilton oriented paths in semicomplete multipartite digraphs by the cost of path-cycle factors in its symmetric (0,1)-digraph.\\

	\noindent{\bf Theorem \ref{mthop}.}
    \textit{Let $D$ be a semicomplete multipartite digraph on $n$ vertices, satisfying the HP-majority inequality and let $c^{pc}_{\max}$ be the maximum cost of a 1-path-cycle factor in $\hat{D}.$
Let $\sigma^{hp}_{\max}$ be \GGnn{the maximum of $\sigma^+(P)$ over all Hamilton oriented paths} $P$ in $D$.
Then $\sigma^{hp}_{\max}=c^{pc}_{\max}$. Both $\sigma^{hp}_{\max}$ and a Hamilton oriented path $P$ with  \GGnn{$\sigma^+(P)=\sigma^{hp}_{\max}$}
can be found in polynomial time. }
\begin{proof}
Note that a Hamilton path $P=x_1x_2\dots x_n$ in $\hat{D}$ can be transformed into a Hamilton oriented path of $D$ by replacing every arc $x_ix_{i+1}$ of zero-cost by $x_{i+1}x_i$.
Also by the construction of $\hat{D}$, $\sigma^{hp}_{\max}$ is equal to the maximum cost of a Hamilton path in $\hat{D}.$

Observe that a Hamilton path of $\hat{D}$ is a 1-path-cycle factor of $\hat{D}$. Thus, the maximum cost of a Hamilton path of $\hat{D}$ is smaller or equal to $c^{pc}_{\max}.$
 Let $F$ be a 1-path-cycle factor of $\hat{D}$ of maximum cost and let $D_F$ be a spanning subdigraph of $\hat{D}$ with $A(D_F)=A(D)\cup A(F).$
 By Theorem \ref{mthp}, $D_F$ has a Hamilton path $P$. Note that the number of zero-cost arcs in $P$ cannot be larger than that in $F$, which means that the cost of $P$
 is not smaller than that of $F$. Thus,  the maximum cost of a Hamilton path of $\hat{D}$ is equal to $c^{pc}_{\max}$.  

By Theorem \ref{mthp} and the proof above, given a maximum cost 1-path-cycle factor in $\hat{D},$ we can find both $\sigma^{hp}_{\max}$ and a Hamilton oriented path $P$ with  \GGnn{$\sigma^+(P)=\sigma^{hp}_{\max}$} in polynomial time. To complete the proof of this theorem, we will describe how one can find  a maximum cost 1-path-cycle factor in $\hat{D}$ in polynomial time. Let $\hat{D}'$ be obtained from $\hat{D}$ by exchanging costs: 0 to 1 and 1 to 0 simultaneously. Observe that a maximum cost 1-path-cycle factor in $\hat{D}$ is a minimum cost 1-path-cycle factor in $\hat{D}'$ and vice versa. \GGn{By a remark in the last paragraph of Sec. 4.11.3 of \cite{BG00} using minimum cost flows, one can find a minimum cost 1-path-cycle factor in an arc-weighted digraph in polynomial time. Since details on how to do it are omitted in \cite{BG00}, we give them below.}

Add new vertices $s$ and $t$ to $\hat{D}'$ together with arcs $sv$ and $vt$ of cost 0 for all vertices $v\in V(D)$ and assign lower and upper bound 1 to every vertex in $V(D)\cup \{s,t\}.$ In this network $N$ in polynomial time, we can find a minimum cost $(s,t)$-flow $f_{\min}$ of value 1.
Note that all arcs of $N-\{s,t\}$ with flow of value 1 form a minimum cost 1-path-cycle factor in $\hat{D}'$ which is a maximum cost 1-path-cycle factor in $\hat{D}.$
\end{proof}


\subsection{Hamilton Oriented Cycles}

Recall the following theorem which is important for the main result of this section.

\begin{theorem}[\cite{BGY98,Y99}]\label{mthc}
There is a polynomial-time algorithm for deciding whether a semicomplete multipartite digraph $D$ has a Hamilton cycle and if $D$ has one, then find it.
\end{theorem}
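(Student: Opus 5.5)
Since this is a cited result (Bang-Jensen, Gutin and Yeo \cite{BGY98}, with the structural input of Yeo \cite{Y99}), I would not re-derive it from scratch. The plan below reconstructs the algorithm behind it: reduce to cycle factors, merge cycles greedily, and handle the case where merging gets stuck using Yeo's structure theorem.

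\textbf{Step 1: necessary conditions.} A Hamilton cycle is in particular a strong spanning subdigraph and a cycle factor. So first I test strong connectivity of $D$. Then I test whether $D$ has a cycle factor, which is the same as a perfect matching in the bipartite graph $B(D)$ with parts $V^+$, $V^-$ and an edge $x^+y^-$ for every arc $xy$. Both tests are polynomial. If either fails, $D$ is not hamiltonian.

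\textbf{Step 2: merging cycles.} Given a cycle factor $F=C_1\cup\dots\cup C_k$, I repeatedly try to reduce the number of cycles. The tool is the standard merging lemma for semicomplete multipartite digraphs. If $C_i$ and $C_j$ are disjoint cycles, and some vertex of $C_i$ dominates some vertex of $C_j$ while also being dominated by some vertex of $C_j$ (in the appropriate consecutive pattern), then $C_i\cup C_j$ can be rebuilt into one cycle on the same vertex set. Checking all pairs of cycles and all pairs of vertices is polynomial. If no merge is possible, then every pair of cycles is \emph{one-directional}: between $C_i$ and $C_j$ all arcs go one way, except possibly between vertices of the same partite set, where there are no arcs at all.

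\textbf{Step 3: the stuck case (the main obstacle).} When no merge applies, Yeo's structure theorem for irreducible cycle factors (Theorem \ref{main_irreducible_simple}) describes $F$ precisely. Roughly, there is a partite set $V_1$ such that the vertices of the cycles that can obstruct merging lie in $V_1$ in a controlled way. Using strong connectivity of $D$, I would then either
\begin{itemize}
\item find a path outside $F$ that allows a multi-cycle rerouting, which reduces $k$, or
\item certify non-hamiltonicity, via a separation showing that every cycle factor needs more vertices of $V_1$ on backward stretches than $V_1$ can supply.
\end{itemize}

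The genuinely hard part is the second alternative. The answer cannot depend on the particular $F$ we happened to start with. To handle this I would re-optimise $F$ with a flow computation: among all cycle factors, choose one that uses the fewest (or most) vertices of $V_1$ in the critical positions, for example by a minimum-cost perfect matching in $B(D)$ with weights on $V_1$-incident edges. I would then show that if even this extremal factor is irreducible with the obstructing structure, no Hamilton cycle exists.

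\textbf{Running time.} Each successful step lowers $k$, so there are at most $n$ iterations, each polynomial. This gives the polynomial algorithm, and the final cycle produced is a Hamilton cycle whenever one exists.
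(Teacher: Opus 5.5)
The paper gives no proof of this statement; it imports it from Bang-Jensen, Gutin and Yeo, so citing it would be enough. As a reconstruction, though, your sketch has a genuine gap. It begins with a false claim in Step 2: in a semicomplete multipartite digraph, two cycles that cannot be merged need \emph{not} be one-directional. Here is a counterexample.
\begin{itemize}
\item Take partite sets $V_1=\{a_1,a_2\}$, $V_2=\{b_1\}$, $V_3=\{c_2\}$, the cycles $C_1=a_1b_1a_1$ and $C_2=a_2c_2a_2$, and the further arcs $a_1c_2$, $b_1a_2$, $c_2b_1$.
\item This digraph is strong, $C_1\cup C_2$ is a cycle factor, and arcs go both ways between $C_1$ and $C_2$.
\item Yet there is no Hamilton cycle. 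The only in-neighbour of $a_1$ is $b_1$, and then $c_2$ is the only possible successor of $a_1$. So a Hamilton cycle would have to be $b_1a_1c_2a_2b_1$, but $a_2b_1\notin A(D)$.
\end{itemize}
All that survives is the weak-domination ordering of Theorem~\ref{main_irreducible_simple}. Here $C_1\rightsquigarrow C_2$ via $V_1$, since $c_2^+=a_2$ and $b_1^-=a_1$. Moreover, the partite set witnessing $C_i\rightsquigarrow C_j$ may differ from pair to pair, as in Figure~\ref{figIrreducible1}. So there is no single distinguished $V_1$ of the kind your Step 3 presupposes.

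The more serious problem is that Step 3 is a placeholder, not an argument. The example shows that strong connectivity plus an irreducible cycle factor is compatible with non-hamiltonicity. The whole algorithm therefore rests on an efficiently checkable criterion that decides between your two alternatives, together with a proof that a negative answer does not depend on the factor you happened to reach. You never state such a criterion:
\begin{itemize}
\item the weight ``fewest (or most) vertices of $V_1$ in critical positions'' is not defined;
\item nothing shows that irreducibility of a factor optimal for that weight excludes a Hamilton cycle;
\item the ``multi-cycle rerouting'' in the other branch is not described.
\end{itemize}
That missing piece is exactly the substance of the Bang-Jensen--Gutin--Yeo algorithm. What your argument actually establishes is only that strong connectivity and a cycle factor are necessary, and that greedy merging runs in polynomial time. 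The running-time count also fails once Step 3 is interleaved. Re-optimising $F$ via a min-cost matching can increase the number of cycles, so ``each successful step lowers $k$'' no longer bounds the number of rounds.
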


If a vertex $v$ belongs to a cycle $C$ then we denote the successor of $v$ on the cycle by $v_C^+$ and the predecessor by $v_C^-$. When $C$ is clear from the context,
we may omit the subscript $C$.
Let $C_1$ and $C_2$ be two disjoint cycles in a semicomplete multipartite digraph $D$. \GGnn{Let $V_1,\dots ,V_p$ be partite sets of $D$.} Let  the following hold for some partite set $V_i$.

\vspace{1mm}

{\em For every arc $u_2 v_1$ from $C_2$ to $C_1$ we have $\{u_2^+,v_1^-\} \subseteq V_i$, where $u_2^+$ is the successor of $u_2$ on $C_2$ and
$v_1^-$ is the predecessor of $v_1$ on $C_1$. }

\vspace{1mm}

\noindent Then we say that $C_1$ {\em $V_i$-weakly-dominates} $C_2$ and denote this by $C_1 \wdomV{V_i} C_2$.
If $C_1 \wdomV{V_i} C_2$ for some $i$ then we also say that
$C_1$ {\em weakly-dominates} $C_2$ and denote this simply by $C_1 \wdom{} C_2$.

See Figure~\ref{figIrreducible1} for an illustration of this definition. For example, in Figure~\ref{figIrreducible1}, $w_1y_2$ is the only arc
from $C_3$ to $C_2$ and $\{w_1^+,y_2^-\} \in V_3$ (as $w_1^+ = w_2$ and $y_2^- = y_1$). Therefore, $C_2 \wdomV{V_3} C_3$.

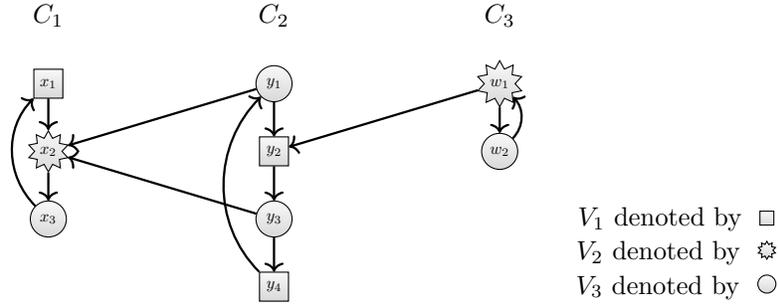
\begin{figure}[h]
\begin{center}
\tikzstyle{vertexY}=[circle,draw, top color=gray!5, bottom color=gray!30, minimum size=22pt, scale=0.62, inner sep=0.8pt]
\tikzstyle{vertexZ}=[star, star points=7,star point ratio=0.8,draw, top color=gray!5, bottom color=gray!30, minimum size=25pt, scale=0.62, inner sep=0.8pt]
\tikzstyle{vertexW}=[star, star points=10,draw, top color=gray!5, bottom color=gray!30, minimum size=25pt, scale=0.62, inner sep=0.8pt]
\tikzstyle{vertexX}=[diamond,draw, top color=gray!5, bottom color=gray!30, minimum size=25pt, scale=0.62, inner sep=0.8pt]
\tikzstyle{vertexQ}=[regular polygon,regular polygon sides=5,draw, top color=gray!5, bottom color=gray!30, minimum size=25pt, scale=0.62, inner sep=0.8pt]
\tikzstyle{vertexR}=[regular polygon,regular polygon sides=4,draw, top color=gray!5, bottom color=gray!30, minimum size=25pt, scale=0.62, inner sep=0.8pt]
\tikzstyle{vertexS}=[regular polygon,regular polygon sides=3,draw, top color=gray!5, bottom color=gray!30, minimum size=25pt, scale=0.62, inner sep=0.8pt]
\begin{tikzpicture}[scale=0.30]

\node at (2,13) {$C_1$};
\node at (12,13) {$C_2$};
\node at (22,13) {$C_3$};

\node (x11) at (2.0,10.0) [vertexR] {$x_1$};   
\node (x12) at (2.0,7.0) [vertexW] {$x_2$};    
\node (x13) at (2.0,4.0) [vertexY] {$x_3$};    

\node (x21) at (12.0,10.0) [vertexY] {$y_1$};
\node (x22) at (12.0,7.0) [vertexR] {$y_2$};
\node (x23) at (12.0,4.0) [vertexY] {$y_3$};
\node (x24) at (12.0,1.0) [vertexR] {$y_4$};

\node (x31) at (22.0,10.0) [vertexW] {$w_1$};
\node (x32) at (22.0,7.0) [vertexY] {$w_2$};

\draw [->, line width=0.03cm] (x11) -- (x12);
\draw [->, line width=0.03cm] (x12) -- (x13);
\draw [->, line width=0.03cm] (x13) to [out=135, in=225] (x11);

\draw [->, line width=0.03cm] (x21) -- (x22);
\draw [->, line width=0.03cm] (x22) -- (x23);
\draw [->, line width=0.03cm] (x23) -- (x24);
\draw [->, line width=0.03cm] (x24) to [out=135, in=225] (x21);

\draw [->, line width=0.03cm] (x31) -- (x32);
\draw [->, line width=0.03cm] (x32) to [out=45, in=315] (x31);

\draw [->, line width=0.03cm] (x21) -- (x12);
\draw [->, line width=0.03cm] (x23) -- (x12);
\draw [->, line width=0.03cm] (x31) -- (x22);

\node at (30,4) {$V_1$ denoted by \nodeRR{}};
\node at (30,2.5) {$V_2$ denoted by \nodeWW{}};
\node at (30,1) {$V_3$ denoted by \nodeYY{}};
\end{tikzpicture}
\end{center}
\caption[An illustration]{Arcs from cycle $C_i$ to $C_j$, for $1 \leq i < j \leq 3$ are not shown.
Note that $C_1 \wdomV{V_1} C_2$
and $C_2 \wdomV{V_3} C_3$.
As there are no arcs from $C_3$ to $C_1$, we have $C_1 \wdomV{V_i} C_3$ for all $i=1,2,3$.}
\label{figIrreducible1}
\end{figure}


We will use the following result which is a corollary of Theorem 3.13 in  \cite{Y97} by Yeo.

\begin{theorem}\label{main_irreducible_simple}
Let $D$ be a semicomplete multipartite digraph. \GGnn{In polynomial time, we can decide whether $D$ has a cycle factor and if it has one, then, in polynomial time, we can find either a Hamilton cycle  or a cycle factor with at least $t\ge 2$ cycles in which we can order its cycles $C_1,C_2,\ldots, C_t$ such that 
$C_i \wdom{} C_j$ for all $1 \leq i < j \leq t$.}
\end{theorem}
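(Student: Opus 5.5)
The plan is to reduce to an iterative ``merging'' procedure on cycle factors. I would carry out the steps in the following order.

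\textbf{Step 1: deciding whether a cycle factor exists.} A cycle factor of $D$ is exactly a perfect matching in the bipartite graph $B(D)$ with sides $V(D)^{-}$ and $V(D)^{+}$, in which $x^{-}y^{+}$ is an edge whenever $xy\in A(D)$. So, in polynomial time, we either find a cycle factor $F$ or conclude that none exists.

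\textbf{Step 2: the merging lemma.} Suppose $F$ has $t\ge 2$ cycles. The basic tool is the following lemma. If $C,C'$ are cycles of $F$ and there is an arc $u'v$ from $C'$ to $C$ with $v^-$ and $u'^+$ in different partite sets, then $v^-$ and $u'^+$ are adjacent. If the arc between them is $v^-u'^+$, then
\[
v\,C\,v^-\;u'^+\,C'\,u'\;v
\]
is a single cycle covering $V(C)\cup V(C')$. So $F$ can be replaced by a cycle factor with fewer cycles.

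\textbf{Step 3: the mixed case and weak domination.} If instead every such pair is oriented $u'^+\to v^-$, I would look at the whole set of arcs from $C'$ to $C$ and try to splice along two of them. The aim is to show the following, possibly after replacing the pair of cycles by a different pair on the same vertex set. Either some combination of these arcs merges $C$ and $C'$ into one cycle, or all endpoints $u'^+,v^-$ lie in one common partite set $V_i$. In the latter case $C\wdomV{V_i}C'$ holds. This mixed case, where the arc pattern is not uniform, is where I expect the real work to be. It is essentially the content of Yeo's Theorem~3.13 in \cite{Y97}, and I would follow his case analysis rather than redo it from scratch.

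\textbf{Step 4: iterating to an irreducible factor.} Repeat the merging while possible. The number of cycles strictly decreases at each step and each step is polynomial, so after at most $n$ steps we obtain either a Hamilton cycle or an ``irreducible'' factor. In an irreducible factor, for every pair of cycles at least one weakly dominates the other.

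\textbf{Step 5: ordering the cycles.} It remains to order the cycles so that $C_i\wdom{}C_j$ for all $i<j$. The relation $\wdom{}$ on an irreducible factor is semicomplete, so it has a Hamilton path. However, a path alone does not give the required transitivity, so I need more. The second part of the obstacle is to show that a ``cyclic'' configuration $C_a\wdom{}C_b\wdom{}C_c\wdom{}C_a$, in which the reverse dominations fail, again allows a merge of some two of the cycles. With that in hand, irreducibility forces the relation to contain a transitive tournament, i.e., an acyclic ordering $C_1,\dots,C_t$.

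Finding such an ordering is then a topological sort, computable in polynomial time. Checking $C_i\wdom{}C_j$ for a given pair is also polynomial: scan all arcs from $C_j$ to $C_i$.
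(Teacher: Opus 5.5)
Steps 1 and 2 are fine, but the plan fails at Step 5. The failure is structural: merging two cycles at a time cannot produce the ordering, and the lemma you propose there is false. Let $D$ be the semicomplete digraph (a semicomplete $6$-partite digraph with singleton partite sets) consisting of the $2$-cycles $X=x_1x_2x_1$, $Y=y_1y_2y_1$, $Z=z_1z_2z_1$, together with all arcs from $V(X)$ to $V(Y)$, from $V(Y)$ to $V(Z)$ and from $V(Z)$ to $V(X)$.

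Between any two of $X,Y,Z$ all arcs point one way. So the union of any two induces a non-strong digraph whose only cycles are the two original $2$-cycles. Hence no pair can be merged, even after re-choosing the pair of cycles, and $\{X,Y,Z\}$ is irreducible in your sense. Moreover $X\rightsquigarrow Y\rightsquigarrow Z\rightsquigarrow X$ hold vacuously, while every reverse domination fails: for the arc $x_1y_1$, the vertices $x_1^+=x_2$ and $y_1^-=y_2$ lie in different partite sets. So this factor admits no ordering, yet $x_1x_2y_1y_2z_1z_2x_1$ is a Hamilton cycle. Your procedure therefore halts with neither admissible output.

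Any correct reduction must be able to act on three or more cycles simultaneously. Handling such configurations, and doing so in polynomial time, is exactly the nontrivial content that the paper does not reprove: it states the theorem as a corollary of Yeo's Theorem~3.13 and gives no further argument. If you cite that theorem for the full conclusion (Hamilton cycle or ordered factor, found in polynomial time), rather than only for a pairwise step, the argument is closed and your Steps 2--5 become superfluous.

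Two further claims in your scaffolding are also wrong.

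\emph{The one-directional dichotomy of Step 3 is false.} Take the same example with $C=Y$, $C'=X$. Every arc $uv$ from $V(X)$ to $V(Y)$ has $u^+\to v^-$ with $u^+,v^-$ in different partite sets. No merge exists, and yet $Y\rightsquigarrow X$ fails. Only the symmetric statement you use in Step 4 (merge, or $C\rightsquigarrow C'$, or $C'\rightsquigarrow C$) can be true.

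\emph{Even a valid triangle lemma would not give the ordering}, because $\rightsquigarrow$ can hold in both directions for a pair. Once every pair is related, an ordering exists exactly when the strict part of the relation is acyclic. Consider a strict $4$-cycle $C_1C_2C_3C_4$ in which $C_1,C_3$ and $C_2,C_4$ dominate each other mutually. It contains no cyclic triple with all reverse dominations failing, yet admits no ordering. So the inference ``no cyclic triple, hence transitive'' is valid for tournaments but not here. You would have to eliminate arbitrary directed cycles of strict dominations, which again requires multi-cycle reductions.
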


For a cycle $C=x_1x_2\dots x_px_1$ and $i,j\in [p]$, we write $C[x_i,x_j]$ to denote the path $x_ix_{i+1}\dots x_j \pmod{p}$. By the existence of the \QG{ordered} cycle factor described in Theorem \ref{main_irreducible_simple}, we may obtain the existence of a Hamilton path with special end-vertices in a semicomplete multipartite digraph.\\

\noindent\textbf{Theorem \ref{ham_path_diff_ends}.}
\textit{Let $D$ be a semicomplete multipartite digraph and let ${F}$ be a 1-path-cycle factor in $D$.
Let $P$ denote the path in ${F}$ and assume the initial and terminal vertices in $P$ belong to different partite sets.
Then $D$ contains a Hamilton path whose initial and terminal vertices belong to different partite sets.}

\begin{proof}

Let $D$, ${F}$ and $P$ be defined as in the statement of the theorem. Let $P=p_1 p_2 p_3 \ldots p_{\ell}$ and
let $D'$ be obtained from $D$ by adding the arc $p_{\ell} p_1$ if it does not exist in $D$ already (if $p_{\ell} p_1 \in A(D)$ then $D'=D$).
Note that $D'$ has a cycle factor. 

By Theorem~\ref{main_irreducible_simple}, we can find in polynomial time a cycle factor $C$  of $D'$ with $t$ cycles and either $t=1$ or we can order its cycles, $C_1,C_2,\ldots,C_t$, of ${C}$ such that
$C_i \wdom{} C_j$ for all $1 \leq i < j \leq t$, where $t \geq 2$. If the arc $p_{\ell} p_1$ belongs to a cycle in ${C}$ then let $a=p_{\ell} p_1$ and otherwise let $a$ be an arbitrary arc on a cycle in ${C}$.
Assume that the arc $a$ belongs to $C_r$, where $1 \leq r \leq t$ and consider the $1$-path-cycle factor, ${F}'$, in $D$ obtained from
${C}$ by deleting the arc $a$. Let $P'=p_1' p_2' p_3' \ldots p_m'$ be the path in ${F}'$.

Note that $p_1'$ and $p_m'$ belong to different partite sets, and assume without loss of generality that $p_1' \in V_1$ and $p_m' \in V_2$, where
$V_1,V_2,\ldots,V_c$ are the partite sets of $D$.

First consider the case when $r<t$. We now transform $P'$ and $C_{r+1}$ into a path where the end-points belong to different partite sets as follows.
We first consider the case when there is no arc from $C_{r+1}$ to $p_m'$ in $D$. In this case let $y \in V(C_{r+1}) \cap V_1$ if such a vertex exists and otherwise let $y \in V(C_{r+1}) \setminus V_2$ be arbitrary. Note that $p_m' y \in A(D)$ and the path $P' C_{r+1}[y,y^-]$ is a path with vertex set $V(C_r) \cup V(C_{r+1})$ and with end-points in different partite sets.

Secondly we consider the case when there is an arc from $C_{r+1}$ to $p_m'$ in $D$, say $z p_m'$.
By Theorem~\ref{main_irreducible_simple} we note that $p_{m-1}'$ and $z^+$ both belong to the same partite set $V_i$ \GGnn{and $p'_{m-1}z,p'_mz^+\in A(D).$} 
Hence, the following are both paths on the vertex set $V(C_r) \cup V(C_{r+1})$:
\[
P_1 = p_1' p_2' \cdots p_{m-1}' z p'_m C_{r+1}[z^+,z^-] \mbox{ and }
P_2 = p_1' p_2' \cdots p_{m}' C_{r+1}[z^+,z]
\]
And as $z$ and $z^-$ belong to different partite sets either $P_1$ or $P_2$ has initial and terminal vertices from different partite sets. So we have in all cases
found a path with vertex set $V(C_r) \cup V(C_{r+1})$ with  initial and terminal vertices in different partite sets.

We can repeat this process in order to get a path with vertex set $V(C_r) \cup V(C_{r+1}) \cup V(C_{r+2})$ with  initial and terminal vertices in different partite sets.
And continuing this process further we obtain a path with vertex set $V(C_r) \cup V(C_{r+1}) \cup \cdots \cup V(C_t)$ with  initial and terminal vertices in different partite sets.

We can now analogously merge cycles $C_{r-1}$, $C_{r-2}$, $\ldots$, $C_1$ with the path ending up with a Hamilton path in $D$ where the initial and terminal vertices are in different partite sets.
\end{proof}
Now we characterize the oriented discrepancy of semicomplete multipartite digraphs by the cost of cycle factors in its symmetric (0,1)-digraph.\\

\noindent\textbf{Theorem \ref{mthoc}.}
\textit{Let $D$ be an $n$-vertex semicomplete multipartite digraph  \GGnn{with $n\ge 3$} satisfying the HC-majority inequality and \QG{let $c^{cf}_{\max}$ be the maximum cost of a cycle factor in $\hat{D}.$ }Let $\sigma^{hc}_{\max}$ be the \GGnn{maximum $\sigma^+(C)$ over all Hamilton oriented cycles $C$} in $D$.
Then $\sigma^{hc}_{\max}=c^{cf}_{\max}$ unless $c^{cf}_{\max}=n$ and $D$ is not hamiltonian, in which case $\sigma^{hc}_{\max}=n-1.$
Both $\sigma^{hc}_{\max}$ and a Hamilton oriented cycle $C$ with  \GGnn{$\sigma^+(C)=\sigma^{hc}_{\max}$} can be found in polynomial time.}

\begin{proof}
\QG{Note that a Hamilton cycle $C=x_1x_2\dots x_nx_1$ in $\hat{D}$ can be transformed into a Hamilton oriented cycle of $D$ by replacing every arc $x_ix_{i+1} \pmod{n} $ of zero-cost by $x_{i+1}x_i \pmod{n}$.
Also by \GGn{the construction} of $\hat{D}$, $\sigma^{hc}_{\max}$ is equal to the maximum cost of a Hamilton cycle in $\hat{D}.$

Observe that a Hamilton cycle of $\hat{D}$ is a cycle factor of $\hat{D}$. Thus, the maximum cost of a Hamilton cycle of $\hat{D}$ is no more than $c^{cf}_{\max}.$
 Let $F$ be a cycle factor of $\hat{D}$ of maximum cost, \GGn{which can be found in polynomial time, see the last two paragraphs of Theorem \ref{mthop}}.
 Let $D_F$ be a spanning subdigraph of $\hat{D}$ with $A(D_F)=A(D)\cup A(F).$} We now consider the following cases.

\vspace{2mm}
{\bf Case 1:} $c^{cf}_{\max}<n$.
\vspace{2mm}
Thus, $F$ has an arc, $a$, of cost 0. Note that $F-a$ is a 1-path-cycle factor in the semicomplete multipartite digraph $D_F-a$.
By Theorem~\ref{ham_path_diff_ends} we note that $D_F-a$ contains a Hamilton path, $P=p_1 p_2 p_3 \cdots p_n$, with initial and terminal vertices from different partite sets.
Now \QG{for the Hamilton oriented cycle $C'=p_1 p_2 p_3 \cdots p_np_1$ in $D$, we know that $\sigma^+(C')$ is at least the cost of $P$ and this is at least the cost of $F$} (as the number of zero-cost arcs in $D_F-a$ is one smaller than  the number of zero-cost arcs in $F$). This implies that  $\sigma^{hc}_{\max}=c^{cf}_{\max}$ and completes the proof in this case.

\vspace{2mm}
{\bf Case 2:} $c^{cf}_{\max}=n$.
\vspace{2mm}
Then $F$ is a cycle factor of $D$. If $D$ is \YLn{hamiltonian} then clearly $\sigma^{hc}_{\max}=n=c^{cf}_{\max}$.
So we may assume that $D$ is not \YLn{hamiltonian}. But removing an arc from $F$ and using Theorem~\ref{ham_path_diff_ends} on the resulting
$1$-path-cycle factor in $D$ gives us a Hamilton path in $D$ where the initial and terminal vertices are in different partite sets.
 We therefore obtain a Hamilton oriented cycle in $D$ with at most 1 backward arc. As $D$ is not \YLn{hamiltonian}, we therefore get
$\sigma^{hc}_{\max}=n-1$, which completes the proof in this case.

The proofs of this theorem and Theorem \ref{ham_path_diff_ends} can be converted to corresponding polynomial-time algorithms. These algorithms and the
 polynomial-time algorithms of Theorems \ref{mthc} and \ref{main_irreducible_simple} imply that both $\sigma^{hc}_{\max}$ and a Hamilton oriented cycle $C$ with  \GGnn{$\sigma^+(C)=\sigma^{hc}_{\max}$} can be found in polynomial time.
\end{proof}
	
\section{Hamilton oriented cycles with maximum number of forward arcs in locally semicomplete digraphs}\label{sec:lsd}

Let $D$ be a digraph. For disjoint subsets $A$ and $B$ of $V(D)$, if all arcs exist from $A$ to $B$, we say $A$ {\em dominates} $B$.  We will use the following three results by
Bang-Jensen \cite{B90}.

\begin{theorem}\cite{B90}\label{thm: non-strong decomposition of LSD}
    Let $D$ be a connected locally semicomplete digraph that is not strong. Then the strong components of $D$ can be ordered uniquely as $C_1, C_2, \dots, C_{\ell}$ such that there is no arc from $V(C_j)$ to $V(C_i)$ when $j>i$, $V(C_i)$ dominates $V(C_{i+1})$ for each $i\in [\ell -1]$ and $C_t$ is a semicomplete digraph for each $t \in [\ell ]$. And if there is an arc from $V(C_i)$ to $V(C_k)$, then $V(C_i)$ dominates $V(C_j)$ for all $j=i+1,i+2, \dots,k$ and $V(C_t)$ dominates $V(C_k)$ for all $t=i,i+1,\dots,k-1$. \end{theorem}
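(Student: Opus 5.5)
This statement is Bang-Jensen's structural theorem, cited from \cite{B90}. Here is how I would prove it from first principles, using only that $D$ is connected, locally semicomplete and not strong.

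\emph{Step 1: consecutive components dominate.} Take any acyclic ordering $C_1,\dots,C_\ell$ of the strong components. The basic tool is a domination lemma. Suppose $A$ and $B$ are disjoint vertex sets, $D[A]$ is strong, there is an arc from $A$ to $b\in B$, and there is no arc from $b$ to $A$. Then $A$ dominates $b$.

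To see this, pick $a\in A$ with $ab\in A(D)$, and let $a'a$ be an arc in $D[A]$. Both $a'$ and $b$ lie in $N^-(a)$? No: they lie in $N^+(a)$ only for $b$, so instead use in-neighbourhoods. Both $a'$ and $a$ lie in $N^-(b)$ or $N^+$-related sets, so they are adjacent to $b$. Since $b$ has no arcs into $A$, this adjacency is the arc $a'b$. Propagating backwards along paths in $D[A]$, which is strong, gives that every vertex of $A$ dominates $b$.

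Now apply this with $A=C_i$. Connectivity together with this local propagation gives that each $C_i$ has an out-arc to $C_{i+1}$, after showing the ordering is forced. The argument goes as follows. If no arc joins $C_i$ to any later component, pick the shortest path in $U(D)$ joining $\bigcup_{j\le i}C_j$ to $\bigcup_{j>i}C_j$. Then use the local semicompleteness at an intermediate vertex to shortcut the path, which gives a contradiction. From this, the ordering in which $C_i\Rightarrow C_{i+1}$ holds is unique, since domination fixes the relative order of any two consecutive components.

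\emph{Step 2: each $C_t$ is semicomplete.} If $t<\ell$, every vertex of $C_{t+1}$ has all of $C_t$ in its in-neighbourhood, so $C_t$ induces a semicomplete digraph. If $t=\ell$, use the out-neighbourhood of any vertex of $C_{\ell-1}$ in the same way.

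\emph{Step 3: the interval property.} Let $xy$ be an arc with $x\in C_i$ and $y\in C_k$, where $k>i+1$. Choose a $(C_i,C_k)$ directed path through $C_{i+1},\dots,C_{k-1}$, which exists by the domination chain, and argue by induction on $k-i$. Both $x$ and a vertex $z\in C_{k-1}$ are in-neighbours of $y$, so $x$ and $z$ are adjacent. This arc must go from $x$ to $z$, by the acyclic ordering. Hence there is an arc from $C_i$ to $C_{k-1}$, and by induction $C_i$ dominates $C_j$ for $i<j\le k-1$.

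Applying the domination lemma again, with $y$ and then all of $C_k$, shows that $C_i$ dominates $C_k$. The statement that $C_t$ dominates $C_k$ is proved symmetrically using out-neighbourhoods of $x$.

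\emph{Expected obstacle.} The main difficulty is Step 1, showing that the components are linearly chained by domination with no gaps. This is where connectivity of $U(D)$ must be used, via the shortest-path shortcutting argument. I would handle that step first.
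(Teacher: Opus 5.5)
The paper does not prove this statement; it is quoted from Bang-Jensen's 1990 paper, so your proposal has to stand on its own. Steps 2 and 3 are fine once $V(C_i)$ dominates $V(C_{i+1})$ is known. In Step 3, $x$ and any $z\in V(C_{k-1})$ are in-neighbours of $y$, so $xz\in A(D)$, and the induction goes through. The genuine gap is Step 1, the part you yourself flag as the crux.

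The sets $X=\bigcup_{j\le i}V(C_j)$ and $Y=\bigcup_{j>i}V(C_j)$ partition $V(D)$. Since $U(D)$ is connected, a shortest $X$--$Y$ path in $U(D)$ is a single arc, so there is no intermediate vertex at which to shortcut. Worse, that arc may go from some $C_j$ with $j<i$ to a later component. This is perfectly consistent with your hypothesis that $C_i$ has no arc to a later component, so no contradiction arises. Even if you showed that $C_i$ has an out-arc to \emph{some} later component, you would not get an arc from $C_i$ to $C_{i+1}$, which is the input your domination lemma needs. Nothing you say rules out $C_i$ and $C_{i+1}$ being non-adjacent in an arbitrary acyclic ordering, and excluding this is the whole content of the chain structure and of uniqueness.

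The shortcut idea does work if you apply it between vertices rather than across that cut. In a locally semicomplete digraph, every shortest path $u_0u_1\dots u_m$ of $U(D)$ is a directed path in one of the two directions. If an internal $u_j$ had $u_{j-1},u_{j+1}$ both in $N^+(u_j)$ or both in $N^-(u_j)$, they would be adjacent, contradicting minimality. So $u_0u_1\in A(D)$ forces $u_1u_2\in A(D)$, and so on; symmetrically if $u_1u_0\in A(D)$. Now take such a path between $u\in V(C_i)$ and $v\in V(C_{i+1})$. It cannot be directed from $v$ to $u$, so it is a directed $(u,v)$-path. Component indices are non-decreasing along it, so it contains an arc from $C_i$ to $C_{i+1}$. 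This holds for every acyclic ordering, which gives both the domination chain and uniqueness.

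Two smaller repairs are also needed. First, your proof of the domination lemma propagates in the wrong direction: from $a'a,ab\in A(D)$, no in- or out-neighbourhood is known to contain both $a'$ and $b$. Propagate forwards instead. If $ab\in A(D)$ and $aa''$ is an arc of $D[A]$, then $a'',b\in N^+(a)$ are adjacent, so $a''b\in A(D)$; strongness of $D[A]$ then reaches all of $A$ from $a$. Second, your lemma only yields domination of a single target vertex. To pass to domination of the whole target component (needed in Step 1 and Step 3), you need the dual backward propagation inside that component: $xy,y'y\in A(D)$ give $x,y'\in N^-(y)$, hence $xy'\in A(D)$.
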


\begin{theorem}\cite{B90}\label{thm: hp in LSD}
    A connected locally semicomplete digraph has a Hamilton path.
\end{theorem}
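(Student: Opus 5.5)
The statement just before this point is Theorem~\ref{thm: hp in LSD}: every connected locally semicomplete digraph has a Hamilton path. It is quoted from Bang-Jensen~\cite{B90}, so the paper simply invokes it. Still, here is how I would prove it directly, by induction on the structure given by Theorem~\ref{thm: non-strong decomposition of LSD}.

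\textbf{Non-strong case.} Suppose $D$ is not strong, with strong components $C_1,\dots,C_\ell$ ordered as in Theorem~\ref{thm: non-strong decomposition of LSD}.
\begin{enumerate}
\item Each $C_t$ is a strong semicomplete digraph, or a single vertex. In either case it has a Hamilton path, by the classical fact that every semicomplete digraph has a Hamilton path (indeed a Hamilton cycle when strong).
\item Let $P_t$ be a Hamilton path of $C_t$.
\item Since $V(C_t)$ dominates $V(C_{t+1})$, the last vertex of $P_t$ has an arc to the first vertex of $P_{t+1}$.
\item Therefore the concatenation $P_1P_2\cdots P_\ell$ is a Hamilton path of $D$.
\end{enumerate}
So this case follows immediately from the decomposition theorem.

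\textbf{Strong case.} Suppose $D$ is strong. I would show that $D$ has a Hamilton cycle, which in particular gives a Hamilton path. The plan is the standard longest-cycle argument.
\begin{enumerate}
\item Take a longest cycle $C$ in $D$, and suppose for contradiction that some vertex $x$ lies outside $C$.
\item By strong connectivity, choose such an $x$ with an arc to $C$, or from $C$, or both. First handle the case where $x$ has both an out-neighbour and an in-neighbour on $C$.
\item Let $u\in V(C)$ with $ux\in A(D)$. The successor $u^+$ and $x$ are both out-neighbours of $u$, so they are adjacent because $D[N^+(u)]$ is semicomplete.
\item If $xu^+\in A(D)$, inserting $x$ between $u$ and $u^+$ gives a longer cycle, a contradiction.
\item Otherwise $u^+x\in A(D)$, and we repeat the argument with $u^+$ in place of $u$. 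Walking around $C$ this way, we eventually reach the point where an in-neighbour of $x$ on $C$ is followed by an out-neighbour, and the insertion succeeds.
\item If $x$ has arcs only into $C$, or only out of $C$, use a shortest path from $C$ through the vertices outside $C$ back to $C$. Local semicompleteness at its endpoints should again let us either shorten this path or insert it into $C$, contradicting maximality.
\end{enumerate}

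The main obstacle is this last step, where the external path has length greater than one and the interplay of several arcs must be controlled. For that step I would simply cite~\cite{B90} rather than redo the analysis.
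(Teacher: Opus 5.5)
Your proposal is correct and follows the route the paper indicates. The paper gives no proof of its own and cites \cite{B90}; your non-strong case is exactly the concatenation along the ordering of Theorem~\ref{thm: non-strong decomposition of LSD} that the paper alludes to after Theorem~\ref{thm: hc in LSD}, and for the strong case the paper relies on Theorem~\ref{thm: hc in LSD} (also from \cite{B90}), which you can cite directly instead of your partial longest-cycle argument.

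Your step~6 is also not a real obstacle. Take a path $ux_1\cdots x_kv$ with $u,v\in V(C)$ and all $x_i\notin V(C)$. Run your step~5 backwards around $C$ from $v$, using in-neighbourhoods: $x_k$ and $v^-$ both lie in $N^-(v)$, so they are adjacent, and you continue with $v^-$. This shows that either $x_k$ can be inserted into $C$, or $x_k$ has an arc to every vertex of $C$, in particular to $u^+$. In the second case $x_1\cdots x_k\,C[u^+,u]\,x_1$ is a cycle longer than $C$, so no shortening of the external path is needed.
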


\begin{theorem}\cite{B90}\label{thm: hc in LSD}
    A strong locally semicomplete digraph has a Hamilton cycle.
\end{theorem}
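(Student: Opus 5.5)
We would argue by taking a longest cycle and showing that it cannot miss any vertex. Let $D$ be a strong locally semicomplete digraph with at least two vertices; being strong, $D$ contains a cycle. Let $C$ be a longest cycle of $D$ and suppose, for a contradiction, that some vertex lies outside $C$. Since $D$ is strong, there is a path $P=u x_1 x_2\dots x_k v$ with $u,v\in V(C)$, $k\ge 1$ and all $x_i\notin V(C)$; we fix such a path with $k$ minimal. Throughout, we write $w^+,w^-$ for successors and predecessors on $C$.

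\textbf{Step 1 (forward propagation).} Let $x\notin V(C)$ and $w\in V(C)$ with $wx\in A(D)$. Both $x$ and $w^+$ are out-neighbours of $w$, so they are adjacent. If $xw^+\in A(D)$, then replacing the arc $ww^+$ by $wxw^+$ gives a longer cycle, which is impossible. Hence $w^+x\in A(D)$, and iterating around $C$ shows that every vertex of $C$ dominates $x$. Consequently $x$ has no out-arc to $C$: an arc $xz$ with $z\in V(C)$ would combine with $z^-x$ to give the insertion $z^-xz$, again a longer cycle.

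\textbf{Step 2 (backward propagation).} The symmetric argument applies to in-neighbourhoods. Let $x\notin V(C)$ and $z\in V(C)$ with $xz\in A(D)$. The vertices $x$ and $z^-$ are both in-neighbours of $z$, hence adjacent, and $z^-x$ would allow the insertion $z^-xz$. So $xz^-\in A(D)$, and iterating shows that $x$ dominates all of $V(C)$.

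\textbf{Step 3 (the contradiction).} Step 1 immediately rules out $k=1$: the vertex $x_1$ has the in-neighbour $u$ on $C$, so it has no out-arc to $C$, contradicting $x_1v\in A(D)$. Hence $k\ge 2$. Step 1 applied to $x_1$ (using $ux_1$) shows that every vertex of $C$ dominates $x_1$. Step 2 applied to $x_k$ (using $x_kv$) shows that $x_k$ dominates every vertex of $C$. Now pick any $c\in V(C)$ and replace the arc $cc^+$ by the path $c\,x_1x_2\dots x_k\,c^+$. This produces a cycle that is $k\ge 2$ vertices longer than $C$, contradicting the maximality of $C$. Therefore $C$ is a Hamilton cycle.

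The main obstacle is Step 1, specifically checking that the propagation around $C$ never stalls. At each vertex $w$ of $C$ that sends an arc to $x$, local semicompleteness must force the dichotomy ``either $x$ can be inserted before $w^+$, or $w^+$ also sends an arc to $x$''. Once this is verified, Steps 2 and 3 follow by symmetry and a single splice.
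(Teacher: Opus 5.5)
Your proof is correct. The paper gives no proof of this statement: it is quoted from Bang-Jensen's 1990 paper, with only the remark that the proof there yields a polynomial-time algorithm. So your longest-cycle argument is a valid self-contained proof, not a variant of something in the text.

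You use more than you need. Step~1 alone finishes the job: once every vertex of $C$ dominates $x_1$, you may replace the arc $v^-v$ of $C$ by $v^-x_1x_2\dots x_kv$, which covers $k=1$ as well. Symmetrically, Step~2 alone finishes by replacing $uu^+$ with $ux_1\dots x_ku^+$.

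So your argument actually shows that every strong locally out-semicomplete (or locally in-semicomplete) digraph on at least two vertices is hamiltonian. That is exactly the ``well-known'' fact the paper invokes in Section~4. Several details can be dropped: the minimality of $k$ and the separate case $k=1$ are not needed. Since you never use $u\neq v$, the ear may even return to the vertex it left, so its existence is never in question.

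Because the paper relies on the algorithmic version, note that your argument is constructive. Start from any cycle $C$ and pick an arc $wx$ leaving $V(C)$. Walk along $C$ from $w$, testing at each step whether $x$ can be inserted before the next cycle vertex; local out-semicompleteness guarantees that each failure passes the arc into $x$ to the next vertex. If no insertion ever succeeds, all of $C$ dominates $x$, and splicing in a shortest path from $x$ back to $V(C)$, as above, lengthens $C$. Each round takes polynomial time and increases $|V(C)|$, so this also gives the polynomial-time algorithm the paper needs.
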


Note that an acyclic ordering of strong components in a digraph can \YLn{be obtained} in polynomial time \cite{BG00}. Thus, the ordering in Theorem \ref{thm: non-strong decomposition of LSD} can be obtained in polynomial time. Using this ordering, it is not hard to construct a Hamilton path in a connected local semicomplete digraph \cite{B90}. The proof of Theorem \ref{thm: hc in LSD} in \cite{B90} leads to a polynomial-time algorithm for finding a Hamilton cycle \AYn{in} a strong locally semicomplete digraph. Finally, for vertex-disjoint subgraphs $S$ and $T$ of a digraph $D$, it is easy to see that we can find  a shortest $(S,T)$-path in polynomial time.

\vspace{1mm}

\begin{theorem}\label{thm: upper bound of LSD}
Let $D$ be a connected locally semicomplete digraph on $n\ge 3$ vertices that is not strong. Let $C_1, C_2,\dots, C_{\ell}$ be the acyclic ordering of strong components of $D$. If $U(D)$ is 2-connected, then there is a shortest $(C_1,C_{\ell})$-path $P$ and a Hamilton oriented cycle $R$ in $D$, such that $P$ is a subpath of $R$ and the set of backward arcs in $R$ is $A(P)$. Such a Hamilton oriented cycle $R$ can be constructed in polynomial time.
\end{theorem}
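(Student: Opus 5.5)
The plan is to build $R$ explicitly from a shortest path. Let $P=p_0p_1\dots p_k$ be a shortest $(C_1,C_\ell)$-path, with $p_0\in V(C_1)$ and $p_k\in V(C_\ell)$; it can be found in polynomial time. We want $R$ to contain $P$ as a subpath traversed "backwards", so that exactly the arcs of $P$ are backward. This means $R$ has the form $p_0\,Q\,p_k\,p_{k-1}\dots p_1\,p_0$, where $Q$ is a \emph{directed} $(p_0,p_k)$-path of $D$ covering exactly $V(D)\setminus\{p_1,\dots,p_{k-1}\}$. So the whole task reduces to finding a directed Hamilton $(p_0,p_k)$-path in $D'=D-\{p_1,\dots,p_{k-1}\}$. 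Since $n\ge 3$ and $U(D)$ is 2-connected, $k\ge 2$ is not needed: $R$ is a genuine Hamilton oriented cycle as long as $Q$ exists.

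\textbf{Step 1: structure of $P$.} Using Theorem~\ref{thm: non-strong decomposition of LSD}, the internal vertices lie outside $C_1\cup C_\ell$, and the components met by $P$ strictly increase in index. If two internal vertices lay in the same component, or if the path went "back", we could shortcut using the domination properties, contradicting minimality. Write $p_j\in V(C_{i_j})$ with $1=i_0<i_1<\dots<i_k=\ell$. Each arc $p_{j-1}p_j$ then gives, by Theorem~\ref{thm: non-strong decomposition of LSD}, that $C_{i_{j-1}}$ dominates $C_{i_{j-1}+1},\dots,C_{i_j}$, and that $C_{i_{j-1}},\dots,C_{i_j-1}$ all dominate $C_{i_j}$.

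\textbf{Step 2: build $Q$ component by component.} Each $C_i$ is a strong semicomplete digraph, so it has a Hamilton cycle by Theorem~\ref{thm: hc in LSD} (or it is a single vertex). Each $C_i$ minus at most one vertex $p_j$ is semicomplete, so it has a Hamilton path. We start $Q$ with a Hamilton path of $C_1$ beginning at $p_0$, namely $C_1[p_0,p_0^-]$ along a Hamilton cycle. Next we traverse, for $i=2,\dots,\ell-1$, a Hamilton path of $C_i-V(P)$ (possibly empty). We finish with $C_\ell[p_k^+,p_k]$. Consecutive nonempty pieces are joined using the fact that $C_i$ dominates $C_{i+1}$.

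\textbf{Step 3 (main obstacle): gaps.} The difficulty is a component $C_i=\{p_j\}$ that is completely removed. There we need an arc from the last vertex $x$ of the piece in $C_{i-1}$ (or an earlier nonempty piece) to the first vertex of $C_{i+1}$. Here 2-connectivity of $U(D)$ is used: $U(D)-p_j$ is connected, so some arc $wz$ goes from $C_s$ to $C_t$ with $s<i<t$.

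By Theorem~\ref{thm: non-strong decomposition of LSD}, $C_s$ dominates $C_{s+1},\dots,C_t$. In particular, $w$ dominates both $x\in C_{i-1}$ (when $s<i-1$) and every $v\in C_{i+1}$. Since $N^+(w)$ is semicomplete, $x$ and $v$ are adjacent. No arc goes backward in the acyclic ordering, so the arc is $xv$. If $s=i-1$, then $C_{i-1}$ dominates $C_{i+1}$ directly. In both cases $C_{i-1}$ dominates $C_{i+1}$.

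For several consecutive removed singletons, we iterate this argument, or apply it to the first nonempty piece before and after the gap. This uses the same jump-arc argument, together with the minimality of $P$ to rule out two consecutive internal vertices being an "unjumped" pair. I expect this bookkeeping, and checking that the shortest-path choice never creates an unavoidable gap, to be the delicate part.

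\textbf{Step 4: algorithm.} Every ingredient is computable in polynomial time:
\begin{itemize}
\item the acyclic ordering of the strong components;
\item the shortest path $P$;
\item Hamilton cycles and paths in semicomplete pieces (by the algorithmic versions of Theorems~\ref{thm: hp in LSD} and \ref{thm: hc in LSD});
\item the connecting arcs found in Step 3.
\end{itemize}
Hence $R$ can be constructed in polynomial time.
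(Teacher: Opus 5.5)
\textbf{There is a genuine gap: an arbitrary shortest path does not work.} Your reduction is the same as the paper's: you need a directed Hamilton $(p_0,p_k)$-path in $D'=D-\{p_1,\dots,p_{k-1}\}$, and such a path exists once $D'$ is connected. Your single-gap argument in Step 3 is also fine. The failure is your claim that minimality of $P$ rules out an ``unjumped'' run of consecutive removed singletons. It does not.

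Take $D$ on $v_1,\dots,v_6$ with arcs $v_iv_{i+1}$ ($1\le i\le 5$) and $v_iv_{i+2}$ ($1\le i\le 4$).
\begin{itemize}
\item $D$ is locally semicomplete and its strong components are the singletons $v_1,\dots,v_6$ in this order.
\item $U(D)$, the square of a path, is 2-connected.
\item $d(C_1,C_6)=3$, and $P=v_1v_3v_4v_6$ is a shortest $(C_1,C_6)$-path.
\item $D-\{v_3,v_4\}$ has only the arcs $v_1v_2$ and $v_5v_6$, so it is disconnected. Hence no Hamilton oriented cycle has backward-arc set exactly $A(P)$.
\end{itemize}
Here $\{v_2\}$ dominates $\{v_3\},\{v_4\}$ but not $\{v_5\}$, so the gap cannot be bridged, although both 2-connectivity and minimality of $P$ hold. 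So the theorem is genuinely existential in $P$, and the choice of $P$ is the missing idea.

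\textbf{The fix is the paper's greedy choice of $P$.} Start at any $p_0\in V(C_1)$. At each step, let $p_{j+1}$ be an out-neighbour of $p_j$ whose strong component has the largest possible index. By Theorem~\ref{thm: non-strong decomposition of LSD} this is a shortest path. With this choice $D'$ is connected, by the following argument.
\begin{itemize}
\item Let $y$ be unreachable from $p_0$ in $U(D')$, with component index $m$ minimum. Then $C_{m-1}=\{p_i\}$ for some internal $p_i$.
\item Since $p_i$ is not a cut vertex of $U(D)$, $C_{m-2}$ dominates $C_m$.
\item Minimality of $m$ then forces $C_{m-2}=\{p_{i-1}\}$.
\item So $p_{i-1}y\in A(D)$ with $y$'s component index larger than that of $p_i$, contradicting the greedy choice of $p_i$.
\end{itemize}
After that, Theorem~\ref{thm: hp in LSD}, or your component-by-component construction, gives $Q$ from $p_0$ to $p_k$. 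In the example above, the greedy path is $v_1v_3v_5v_6$, and $D'$ is the directed path $v_1v_2v_4v_6$.
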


\begin{proof}
Let $D$ be a locally semicomplete digraph on $n\ge 3$ vertices that is not strong, but such that $U(D)$ is 2-connected. Let $C_1, C_2,\dots, C_{\ell}$ be the acyclic ordering of strong components of $D$ and define the function $cn$ such that $cn(x) = r$ if $x \in V(C_r)$.
Define the $(C_1,C_{\ell})$-path $P=p_1 p_2 \cdots p_q$ as follows. Let $p_1 \in V(C_1)$ be arbitrary and for each $i$ let $p_{i+1}$ be an arbitrary vertex such that $p_i p_{i+1} \in A(D)$ and $cn(p_{i+1})$ is maximum possible. Continue this process until $p_q \in C_{\ell}$. By Theorem~\ref{thm: non-strong decomposition of LSD} we note that $P$ has length $d(C_1,C_{\ell})$, i.e., it is a shortest $(C_1,C_{\ell})$-path in $D$.

We will now show that $D'=D - \{p_2,p_3,\ldots, p_{q-1}\}$ is connected. For the sake of contradiction assume this is not the case and let $y$ be a vertex which can not be reached from $p_1$ in \GGnn{$U(D')$} and such that $cn(y)$ is minimum.
This implies that $V(C_{cn(y)-1} )= \{p_i\}$ for some $i \in \{2,3,\dots,q-1\}$, as otherwise there is a vertex in $V(D') \cap V(C_{cn(y)-1})$ which dominates $y$ in $D'$, a contradiction to our choice of $y$.
As \GGnn{$U(D)$} is $2$-connected $p_i$ is not a cut vertex in \GGnn{$U(D)$}, which by Theorem~\ref{thm: non-strong decomposition of LSD} implies that $V(C_{cn(y)-2})$ dominates $V(C_{cn(y)})$. By our choice of $y$ this implies that $V(C_{cn(y)-2})=\{p_{i-1}\}$.
However, this contradicts our construction of $P$ as $p_{i-1}$ has an arc to $y$ and $cn(y)>cn(p_i)$. Therefore $D'$ is connected.

As $D'$ is connected, Theorem~\ref{thm: hp in LSD} implies that $D'$ contains a Hamilton path $Q$.
As $Q$ first picks up all vertices in $C_1$ and $C_1$ is a strong semicomplete digraph and therefore contains a Hamilton cycle, or is a single vertex, we may assume that $Q$ starts in $p_1$ (as every vertex in $C_1$ have the same out-neighbours in $D-V(C_1)$). Analogously we may assume that $Q$ ends in $p_q$.
Now \YZn{$R=p_1Qp_qp_{q-1}\cdots p_1$} is the desired oriented Hamilton cycle where all arcs on $P$ are backward arcs and all arcs on $Q$ are forward arcs.

Note that using the complexity remarks given after Theorem \ref{thm: hc in LSD}, the proof of this theorem can be converted into a polynomial-time algorithm for constructing $R.$
    \end{proof}

\begin{theorem}\label{thm: lower bound of LSD}
  \YLn{Let $D$ be a connected locally semicomplete digraph that is not strong.} Let $C_1, C_2,\dots, C_{\ell}$ be the acyclic ordering of strong components of $D$.
  For any oriented $(C_1,C_{\ell})$-path $P$, there is at least $d(C_1,C_{\ell})$ forward arcs in $P$.
    \end{theorem}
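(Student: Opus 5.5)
The idea is to use a potential function on $V(D)$ that grows by at most one along a forward arc and does not grow along a backward arc. For $v\in V(D)$ let $h(v)$ be the length of a shortest directed path in $D$ from some vertex of $V(C_1)$ to $v$. Such a path exists: by Theorem~\ref{thm: non-strong decomposition of LSD}, $V(C_i)$ dominates $V(C_{i+1})$ for every $i$, so every vertex is reachable from $C_1$. Also $h(v)=0$ for $v\in V(C_1)$. As in the proof of Theorem~\ref{thm: upper bound of LSD}, write $cn(x)=r$ if $x\in V(C_r)$. Recall that an arc $xy$ of $D$ forces $cn(x)\le cn(y)$.

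\textbf{Key claim.} If $cn(y)\le cn(x)$, then $h(y)\le h(x)$. If $cn(y)=1$ this is trivial, since then $h(y)=0$. Otherwise take a shortest path $q_0q_1\cdots q_k=x$ with $q_0\in V(C_1)$ and $k=h(x)$. Along it $cn$ is nondecreasing, going from $1$ to $cn(x)\ge cn(y)$. So there is an index $i$ with $cn(q_i)<cn(y)\le cn(q_{i+1})$.
\begin{itemize}
\item Since $q_iq_{i+1}$ is an arc from $C_{cn(q_i)}$ to $C_{cn(q_{i+1})}$, Theorem~\ref{thm: non-strong decomposition of LSD} says that $V(C_{cn(q_i)})$ dominates $V(C_j)$ for all $j$ with $cn(q_i)<j\le cn(q_{i+1})$.
\item In particular $q_iy\in A(D)$, so $q_0\cdots q_iy$ is a directed walk from $C_1$ to $y$ of length $i+1\le k$.
\item Hence $h(y)\le h(x)$.
\end{itemize}
Checking this step carefully against the statement of Theorem~\ref{thm: non-strong decomposition of LSD} is the main point of the proof. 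The rest is bookkeeping.

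\textbf{Counting along $P$.} Let $P=v_1v_2\cdots v_t$ be an oriented path with $v_1\in V(C_1)$ and $v_t\in V(C_\ell)$.
\begin{itemize}
\item If the arc between $v_i$ and $v_{i+1}$ is forward, i.e. $v_iv_{i+1}\in A(D)$, then $h(v_{i+1})\le h(v_i)+1$ by appending the arc.
\item If it is backward, i.e. $v_{i+1}v_i\in A(D)$, then $cn(v_{i+1})\le cn(v_i)$, and the key claim gives $h(v_{i+1})\le h(v_i)$.
\end{itemize}
Summing over $i$ gives $h(v_t)-h(v_1)\le\sigma^+(P)$, and $h(v_1)=0$, so $\sigma^+(P)\ge h(v_t)$.

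\textbf{Relating $h$ to $d(C_1,C_\ell)$.} It remains to show $h(v_t)\ge d(C_1,C_\ell)$ for every $v_t\in V(C_\ell)$. Take a shortest directed path from $V(C_1)$ to $v_t$. Cut it at its last vertex in $V(C_1)$ and at its first vertex in $V(C_\ell)$. The resulting piece is a $(C_1,C_\ell)$-path in the sense of the definition, since all its interior vertices lie outside $C_1\cup C_\ell$. Its length is at most $h(v_t)$, so $d(C_1,C_\ell)\le h(v_t)$, which proves the theorem.
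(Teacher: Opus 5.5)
Your proof is correct, and it takes a genuinely different route from the paper's. The paper works directly on the vertices of $P=p_1\cdots p_t$. It extracts the ``record'' indices $j_1=2<j_2<\dots<j_q=t$, at which $cn$ strictly exceeds its previous record. It observes that for $k\ge 2$ the arc entering $p_{j_k}$ must be forward, since $cn(p_{j_k-1})\le cn(p_{j_{k-1}})<cn(p_{j_k})$. It then uses the half of Theorem~\ref{thm: non-strong decomposition of LSD} saying that $V(C_t)$ dominates $V(C_k)$ for all $t=i,\dots,k-1$ to get $p_{j_{k-1}}p_{j_k}\in A(D)$. Thus $p_1p_{j_1}\cdots p_{j_q}$ is a directed $(C_1,C_\ell)$-path whose $q$ arcs are matched with $q$ distinct forward arcs of $P$. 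You instead use a potential function, the distance $h$ from $C_1$. Your key lemma, that $h$ is nondecreasing in the component index and hence constant on each $C_r$, relies on the other half of that theorem: $V(C_i)$ dominates every $V(C_j)$ with $i<j\le k$. The paper's argument gives an explicit certificate built from vertices of $P$ itself, with no auxiliary function. Yours isolates a clean structural fact about non-strong locally semicomplete digraphs. It also yields the more general inequality $\sigma^+(P)\ge h(v)-h(u)$ for every oriented path from $u$ to $v$. In particular, your argument does not need the interior of $P$ to avoid $C_1\cup C_\ell$, whereas the paper uses $p_2\notin V(C_1)$ to guarantee that the first arc $p_1p_2$ is forward.
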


   \begin{proof}
   Let $D$ and $C_1, C_2,\dots, C_{\ell}$ be defined as in the theorem and furthermore define the function $cn$ such that $cn(x) = r$ if $x \in V(C_r)$.
   Let $P=p_1 p_2 p_3 \cdots \YZn{p_t}$ be an oriented $(C_1,C_{\ell})$-path in $D$.
   \YZn{Let $j_1=2$.  The arc $p_{j_1-1}p_{j_1}=p_1p_2$ is a forward arc in $P$ since $P$ is a $(C_1,C_l)$-path and vertices in $C_1$ do not have in-neighbours in $V(D)\setminus V(C_1)$. For $k\geq 2$, let $j_k$ be the smallest subscript greater than $j_{k-1}$ such that $cn(p_{j_{k-1}})<cn(p_{j_k})$ (such $j_k$ exists if $j_{k-1}\neq t$ since then $cn(p_{j_{k-1}})< cn(p_t)$). Observe, from how we choose $j_k$, that $cn(p_{j_k-1})\leq cn(p_{j_{k-1}})<cn(p_{j_k})$. \AYn{Continue this process and assume} that we end up obtaining a sequence $j_1,j_2,\dots, j_q$ for some positive integer $q$ where $j_q=t$. Since $cn(p_{j_{k}-1}) \leq cn(p_{j_{k-1}}) < cn(p_{j_{k}})$, by Theorem \ref{thm: non-strong decomposition of LSD}, we have that for all $k=2,3,4,\dots,q$, \YLn{$p_{j_{k}-1}p_{j_k}\in A(P)$} which in turn gives us $p_{j_{k-1}}p_{j_{k}}\in A(D)$. Thus, $p_1 p_{j_1} p_{j_2}  \cdots p_{j_q}$ is a $(C_1,C_{\ell})$-path in $D$ with $q$ arcs and therefore $q \geq d(C_1,C_{\ell})$. In addition, as $p_{j_{k}-1}p_{j_k}\in A(P)$ for all $k=1,2,\dots,q$, there are at least $q$ forward arcs in $P$, which combining with $q \geq d(C_1,C_{\ell})$ proves the result.}
   \end{proof}

   \noindent\textbf{Theorem \ref{thm: hoc in LSD}.}
   \textit{\GGnn{Let $\sigma^{hc}_{\max}=0$ if $D$ has no Hamilton oriented cycle}, and let $\sigma^{hc}_{\max}$ be the \GGnn{maximum of $\sigma^+(R)$ over all Hamilton oriented cycles} $R$ in $D$, \GGnn{otherwise}. The following now holds.
\begin{itemize}
\item \textit{If $D$ is not strong and $U(D)$ is 2-connected, then $\sigma^{hc}_{\max}= n - d(C_1,C_{\ell})$. }
\item \textit{If $D$ is not strong and $U(D)$ is not 2-connected, then \GGnn{ $\sigma^{hc}_{\max}= 0$.}}
\item \textit{If $D$ is strong, then $\sigma^{hc}_{\max}= n$.}
\end{itemize}
}
\textit{Furthermore, in polynomial time, we can find a Hamilton oriented cycle of $D$ with the maximum number of forward arcs \GGnn{provided $D$ has such an oriented cycle}.
}

   \begin{proof}
  We first consider the case when $D$ is not strong and $U(D)$ is 2-connected.
  Theorem \ref{thm: upper bound of LSD} implies that $D$ contains an oriented Hamilton cycle $R'$, which consists of two internally disjoint paths \GGn{from a vertex in $C_1$ to a vertex in $C_{\ell}$} such that one of them is a shortest $(C_1,C_{\ell})$-path in $D$. Therefore, \GGnn{$\sigma^{hc}_{\max} \ge \sigma^+(R') = n - d(C_1,C_{\ell})$.}

        Now let $R=c_1 c_2 \cdots c_n c_1$ be an arbitrary oriented Hamilton cycle in $D$ with the maximum possible number of forward arcs.
Without loss of generality, we may assume that $R[c_a,c_b]$ is a $(C_{\ell},C_1)$-path in $U(D)$, for some $a,b \in [n]$.
 By Theorem \ref{thm: lower bound of LSD}, we note that
the reverse of $R[c_a,c_b]$ contains at least $d(C_1,C_{\ell})$ forward arcs and therefore $R[c_a,c_b]$ contains at least $d(C_1,C_{\ell})$ backward arcs.
This implies that $R$ contains at least $d(C_1,C_{\ell})$ backward arcs, which by our choice of $R$ implies that
 \GGnn{$\sigma^{hc}_{\max} = \sigma^+(R) \leq n - d(C_1,C_{\ell})$}. So, $\sigma^{hc}_{\max} = n - d(C_1,C_{\ell})$, which completes the first case.

When $D$ is not strong and $U(D)$ is not 2-connected it is clear that $D$ contains no oriented Hamilton cycle, as deleting a vertex from a Hamilton cycle leaves the remaining graph connected. So we may now consider the case when $D$ is strong. However as every strong locally semicomplete digraph contains a Hamilton cycle by Theorem~\ref{thm: hc in LSD} we note that this case also holds.
Finally, the polynomial-time algorithm claims after Theorem~\ref{thm: hc in LSD}  and in  Theorem \ref{thm: upper bound of LSD} implies the polynomial-time algorithm claim of this theorem.
   \end{proof}

\section{Complexity of finding Hamilton oriented cycles with maximum number of forward arcs}\label{sec:disc}

\GGn{Encouraged by our results on semicomplete multipartite digraphs and locally semicomplete digraphs, one may guess that if the Hamilton cycle problem is polynomial-time solvable in a class of generalizations of semicomplete digraphs, then
the same is true for the problem of finding a Hamilton \QG{oriented} cycle with maximum number of forward arcs.} Unfortunately, this is not the case. 	

For example, consider {\em locally out-semicomplete digraphs} i.e. digraphs in which the out-neighbors of every vertex induce a semicomplete subdigraph \cite{B}. 
It is well-known that a locally out-semicomplete digraph $D$ has a Hamilton cycle if and only if $D$ is strongly connected. Thus, the Hamilton cycle problem is polynomial-time solvable for locally out-semicomplete digraphs. Now consider a locally out-semicomplete digraph $H$ obtained from a bipartite undirected graph $B$ with partite sets $X$ and $Y$, $|X|=|Y|\ge 2$, by orienting every edge of $B$ from $X$ to $Y$ and adding an arbitrary arc between every pair of vertices in $Y$. Clearly, $H$ has  a Hamilton oriented cycle if and only if $B$ has a Hamilton cycle. However, the problem of deciding whether a bipartite graph has a Hamilton cycle is {\sf NP}-hard and so is the problem of deciding whether $H$ has a Hamilton oriented cycle.

A digraph $D=(V,A)$ is {\em quasi-transitive} if whenever $xy,yz\in A$ for distinct $x,y,z\in V$, we have either $xz\in A$ or $zx\in A$ or both.
We can decide whether a quasi-transitive digraph has a Hamilton cycle in polynomial time \cite{G94}. However, by orienting all edges of a bipartite graph $B$ from $X$ and $Y$ as above, we obtain a quasi-transitive digraph $Q$, which is even a transitive digraph. Thus,  even the problem of deciding whether $Q$ has a Hamilton oriented cycle is {\sf NP}-hard.

\GGn{In fact, the above argument shows that it is {\sf NP}-hard to find a Hamilton oriented cycle in a digraph which is bipartite, acyclic and transitive. }



\end{document}